\newcommand{\mE}{{\mathbb E}}
\newcommand{\calF}{{\mathcal F}}
\newcommand{\mP}{{\mathbb P}}
\newcommand{\mR}{{\mathbb R}}
\newcommand{\F}{\mathcal{F}}
\newcommand{\R}{\mathbb R}
\newcommand{\1}{\mathbf 1}
\newtheorem{theorem}{Theorem}
\newtheorem{lemma}{Lemma}
\newtheorem{prop}{Proposition}
\newtheorem{corollary}{Corollary}
\newtheorem{rem}{Remark}
\newtheorem{assumption}{Assumption}
\title[Estimating the interaction graph]{Estimating the interaction graph of stochastic neuronal dynamics by
  observing only pairs of neurons}
\author[E.\,De Santis]{E.\,De Santis}
\address{Dipartimento di Matematica, Universit\`a di Roma La Sapienza,
Piazzale Aldo Moro, 5, 00185, Rome, Italy}
\email{desantis@mat.uniroma1.it}
\author[A.\,Galves]{A.\,Galves}
\address{Instituto de Matem\'atica e Estat\'{i}stica, Universidade de S\~ao Paulo, Rua do Mat\~ao 1010, 05508-090, S\~ao
Paulo, Brazil.}
\email{galves@usp.br}
\author[G.\,Nappo]{G.\,Nappo}
\address{Dipartimento di Matematica, Universit\`a di Roma La Sapienza, Piazzale Aldo Moro, 5, 00185, Rome, Italy}
\email{nappo@mat.uniroma1.it}
\author[M.\,Piccioni]{M.\,Piccioni}
\address{Dipartimento di Matematica, Universit\`a di Roma La Sapienza, Piazzale Aldo Moro, 5, 00185, Rome, Italy}
\email{mauro.piccioni@uniroma1.it}
\date{June 20,  2021}
\begin{document}

\begin{abstract}
We address the questions of identifying pairs of interacting neurons from the observation of their spiking activity. The neuronal network is modeled by a system of interacting point processes with memory of variable length. The influence of a neuron on another can be either excitatory or  inhibitory. To identify the  existence and the nature of an interaction we propose an algorithm  based only on the observation of joint activity of the two neurons in successive time slots. This reduces the amount of computation and storage required  to run the algorithm, thereby making the algorithm suitable for the analysis of real neuronal data sets. We obtain computable upper bounds for the probabilities of false positive and false negative detection. As a corollary we  prove the consistency of the identification algorithm.

\medskip
\noindent
\emph{Keywords:}  Neuronal networks, multivariate point processes, stochastic processes with memory of variable length, interaction graphs, statistical model selection

\medskip \noindent
\emph{AMS MSC 2010:} 62M45, 60E15, 62M30
\end{abstract}

\maketitle

\section{Introduction} \label{intro}

We address the question of inferring the interactions in a system of spiking neurons modeled as follows. The spiking activity of each neuron is a point process whose intensity
depends on the previous activity of a  set of neighbors, henceforth called its presynaptic neurons. Each neuron $i$ is affected by the spiking activity of its presynaptic neurons taking place after the
last spiking time of $i$. This means that a neuron resets its memory after each of its spikes. This biologically motivated feature implies that this system of interacting point processes  has a memory of variable length, see Rissanen (1983) \cite{rissanen}.
In the present article we introduce a new statistical
procedure to infer for each pair of neurons, whether one is presynaptic to the other.

The model of interacting spiking neurons considered here was first introduced in Galves and L\"{o}cherbach (2013)~\cite{Galves-Loch:13} in a discrete time framework. Models in this class were subsequently analyzed in many articles, including \cite{AAEE}, \cite{duarte_hydrodynamic_2015}, \cite{fournier2016}, \cite{andre}, \cite{andre.planche}, \cite{monma},  \cite{Baccelli2021ThePL}, and \cite{yu2021metastable}.

As far as we know, the problem of inferring the graph of interactions  for such a kind of models has been addressed only  in Duarte et al.~(2019). Given a sample of the spiking activity of a  large set of neurons,
they propose a pruning procedure
to retrieve the set of presynaptic neurons of a fixed neuron $i$.
First the algorithm  assumes that all the remaining neurons are presynaptic to $i$.
Then the nonparametric
maximum likelihood estimates of the spiking probabilities of $i$ are computed, as a function of the spiking activities of the other neurons after the last spiking time of $i$. The same procedure is
repeated by excluding the candidate presynaptic neuron $j$.
The criterion to prune or not
$j$ from the estimated set of presynaptic neurons of $i$ is the
following. For any fixed observed history of the activities of the other neurons, the difference between the estimated
probabilities, with or without the information
concerning $j$, is computed. If the maximum of these differences is below a given threshold, then $j$ is pruned.
Otherwise, $j$ is kept in the estimated set of presynaptic neurons of $i$.
Under suitable assumptions, in \cite{dglo}  the consistency of the procedure has been obtained:  upper bounds are provided for the probabilities of false positive and
false negative detection, that converge to $0$ as the length of the sample increases.

Despite the clear mathematical interest of the result obtained in Duarte et al.~\cite{dglo}, the proposed procedure
presents drawbacks when used to analyze real neuronal data. First of all, it requires extremely lengthy data sets in order to observe the possible histories
of a big set of neurons for a sufficiently large number of times. Moreover, the required computations cannot be localized in the observed set of neurons. In fact, to obtain the required estimates, the histories of all the neurons
have to be taken into account at the same time. Even more questionable is the assumption that all the neurons of the system can be observed. Actually, the activity of many inhibitory neurons can hardly be observed directly, by means of the actual spike sorting procedures.

The algorithm introduced in the present article aims to overcome these drawbacks. To guess the influence of neuron $j$ on neuron $i$, only  the spiking activities of these two neurons are considered.
The observation time of the system is divided into short time slots. Then the following two probabilities are compared:  (\emph{a}) the probability of a spike of neuron $i$, following another spike of $i$, observed in the previous time slot; (\emph{b}) the probability of a spike of $i$, following a spike of $j$ and one of $i$, observed in the two previous time slots (first $i$ and then $j$).
 For sufficiently small time slots, the difference between the latter and the former probabilities reveals the eventual presence of $j$ in the set of presynaptic neurons of $i$. 
  Under suitable assumptions, in the limit as the length of the time slots decreases to $0$, if $j$ has an excitatory, respectively inhibitory, effect on~$i$, then this difference becomes positive, respectively negative.  If $j$ is not in the set of presynaptic neurons of $i$, then the  limit of this difference is 0. These asymptotic results provide the basis for the statistical algorithm considered in the present article.

To recover the limiting behavior described above from the observation of a large but finite sample, the length of the time slot must be sufficiently small. On the other hand,   we need to  observe  a sufficiently large number of times the event that two spikes of $i$, with a spike of $j$ in between, occur in three consecutive time slots. Therefore the length of the time slot cannot be  too small.
 This fact is reminiscent of the familiar bias-variance tradeoff.  As a matter of fact, in our proof we estimate the maximal length of the time slots for which our consistency proof works.

The rationale behind the algorithm  considered here is simple to explain. The algorithm considers the spiking activity of neurons $i$ and $j$, in three successive time slots, starting with a spike of $i$, in order to take advantage of the reset feature of the neuronal activity. Indeed, after each spike, a neuron resets its memory by forgetting the previous history of the system. Therefore, if we know that there was a spike of $i$ in the first time slot, the reset property helps detecting if a spike of neuron $j$, occurring in the second time slot, influences or not the activity of $i$  in the third time slot.

The algorithm discussed here is reminiscent of the approach to infer neuronal interactions introduced in the seminal papers by Brillinger and co-authors \cite{Brillinger1976}, \cite{Brillinger1988}. More recently, the problem of identifying pairs of interacting components in a different class of multivariate point processes, namely Hawkes processes, was addressed by Eichler, Dahlhaus, Dueck (2017) \cite{Eichler15graphicalmodeling}, in the framework of Granger causality (see Granger 1969~\cite{granger}).

The structure of the paper is the following. In Section 2 the model is introduced and the two main results are stated. The proofs of Theorems \ref{differenz} and \ref{main} are given in Sections~3 and 4, respectively.

\section{Definitions and main result}

We start by introducing the multivariate point process modeling the system of spiking neurons considered here. The main ingredients used to define the
process are the following:

\begin{itemize}
\item  a finite set $I$, henceforth called the set of \textit{neurons};
\item  a matrix $(w_{j \to i} \in  \R : (j,i) \in I^2 )$,  henceforth called the  matrix of \textit{synaptic weights};
\item a family of  simple point processes $\{ \left (T^{i}_{n }\right)_{n \ge 1}: i\in I\}$, with $0 <T^{i}_1 < T^{i}_2<\ldots $ , denoting the successive spiking times of neuron $i$;
\item a family of  non-decreasing functions  $\phi_i : \R  \to [ 0, +\infty [$, henceforth called {\it spiking rate functions}.
\end{itemize}

If $w_{j \to i } > 0 $ (respectively $w_{j \to j } < 0 $), we say that the neuron $j$ has an {\it excitatory} (respectively {\it inhibitory}) effect on neuron $i$.
In case $w_{j \to i }= 0 $, we say that neuron $j$ does not affect neuron $i$. We assume that there is no self-interaction and therefore  $w_{j \to j } = 0 $, for all $j$.  The reason for this terminology will be readily clarified  (see \eqref{def:U-i} and \eqref{def:intensity-N-i}).

The set $\mathcal {V}^i=\{j: w_{j \rightarrow i} \neq 0\}$, is called the set of \textit{presynaptic neurons} of $i$. Obviously
\[
\mathcal {V}^i=\mathcal V^{i}_{+} \cup \mathcal V^{i}_{-}\, ,
\] 
where $V^{i}_{+}$ and $V^{i}_{-}$ are the sets
of excitatory and  inhibitory ones, 
$$
\mathcal V^{i}_{+} =\left \{j: w_{j \rightarrow i} >0\right \},\qquad \mathcal V^{i}_{-} =\left \{j :w_{j \rightarrow i} <0 \right \},
$$
respectively. 

We use the notation $d$ to denote the maximum cardinality of the sets of presynaptic neurons
\begin{equation*}
d=\max\{|\mathcal V^{i}|; i \in I \}\, .
\end{equation*}

\vskip 6truemm

For any neuron $i$, we define the spike counting measure $N^i$ as follows. For any subset $A\subset \mathbb{R}^+$,
\[
N^{i}(A)=\sum_{n\ge 1}\1_{\{ T^{i}_n \in A\}}.
\]
For any positive real number $t$, when  the event $\{T^{i}_{1} < t\}$ is realized, we define $L^{i}(t)$  as the last spiking time of neuron $i$ occurring before time $t$
\[
L^{i}(t)=\sup\{n \ge 1 : T^{i}_{n} < t \}\, .
\]
This  definition allows to introduce the \textit{membrane potential} $U^{i}(t)$ of neuron $i$ at time $t$ as follows
\begin{equation}\label{def:U-i}
U^i(t)=
\begin{cases}
 U^i(0)+\sum_{j  \in \mathcal {V}^i} w_{j \to i} N^{j}(0, t],, &\text{ if } 0\leq t< T^{i}_{1},
\vspace{2mm}
 \\
\sum_{j  \in \mathcal {V}^i} w_{j \to i} N^{j}(L^{i}(t), t], &\text{ if} \, t\geq T^{i}_{1},
\end{cases}
\end{equation}
where $U^{i}(0)$ denotes the initial value of the membrane potential.

We will denote by $U(t)$ the vector of the membrane potentials of all the neurons at time $t$
\[
U(t)= \left(U^{i}(t): i \in I \right).
\]
In what follows, the initial value $U(0)$ of the vector  of membrane potentials is chosen in an arbitrary way. We are not assuming the stationarity of the processes.

Finally, for any positive real number $t$, we define $\F_t$ as the $\sigma-$algebra  generated by the family of spike counting measures $\left( N^{i}(A): i \in I, A \subset [0,t] \right) $,
together with the initial vector of membrane potentials $U(0)=\left(U^i(0): i \in I \right)$.

 \vskip 6truemm

With this notation, we can now formally relate the elements of the  model in the following way.
For any neuron $i$ and any pair of positive real number $t <t^{\prime}$, we require the spike counting measures $N^{i}(t, t^{\prime}]$ , $i \in I$, to satisfy the equation
\begin{equation}\label{def:intensity-N-i}
\mE\left( N^{i}(t,t^{\prime}]   \, |\,   \F_t \right)= \mE\left(\int_t^{{t^{\prime}}}\phi_i (U^i(r)) dr |  \F_t  \right)\, .
\end{equation}
Informally this condition can be stated as
\[
\mP\left( N^{i}(t,t+dt] =1 \, |\,   \F_t \right)= \phi_i (U^{i}(t))dt + o(dt).
\]

 \vskip 6truemm

\begin{assumption}\label{phi} The spiking rate functions $\phi_i: \mathbb {R} \to (0,+\infty)$ are 
\begin{enumerate}
 \item nondecreasing,
\item
bounded away from $0$, with
$$
\alpha=\min_{i \in I}\inf_{u \in \mR}\phi_i(u)>0\, ,
$$
\item  bounded above, with
$$
\beta= \max_{i \in I}\sup_{u \in \mR}\phi_i(u)< +\infty\, ,
$$
\item and satisfy
$$
\min_{i \in I} \{| \phi_i(w_{j \to i})-\phi_i(0)|: j \in \mathcal {V}^i\}=\delta >0.
$$
\end{enumerate}
\end{assumption}
Obviously, $\alpha$, $\beta$ and $\delta$ are such that $\alpha+\delta\le \beta$.
In the sequel we shall use the shorthand notation
\begin{equation}\label{s-tau}
s=\frac {\alpha}{\beta}\, \, \, \mbox{and\, } \tau=\frac {\delta}{\beta} \in (0,1),
\end{equation}
which are constrained by  $s + \tau \le 1$.

\vskip 6truemm

Before defining the estimation algorithm, we need to introduce the following events, depending on a parameter $\Delta >0$ to be chosen in a suitable way.
Given two neurons $i \in I$ and $j \in I$, with $i \neq j$,  we  denote
\begin{align}\label{eventoA0}
A^{ i }(\Delta)&= \{ N^{i}(0 , \Delta  ] >0 \},
\\\label{eventoB0}
B^{ i }(\Delta)&= A^{ i }(\Delta) \cap \{  N^{i}(\Delta, 2\Delta  ] >0   \} ,
\\\label{eventoC0}
C^{ j\to i }(\Delta)&= A^{ i }(\Delta) \cap \{ N^j  (\Delta, 2 \Delta ] >0 \} ,
\\\label{eventoD0}
D^{ j\to i }(\Delta)&=C^{ j\to i }(\Delta) \cap \{N^i(2\Delta, 3 \Delta] >0\}.
\end{align}

In the following, we are going to use the notation  $\mP_{u}(\cdot)$ instead of $\mP(\cdot \, | \, U(0)=u)$.

\begin{theorem}\label{differenz} Suppose that the family of spiking rate functions  $\{\phi_i: i \in I \}$ satisfy Assumption \ref{phi}.   Let
$\Delta^*=\frac {s^3\tau}{34 d\beta}$.
 Then,  for any value of
$\Delta \in (0, \Delta^*]$, any fixed pair of neurons $i$ and $j$ with $i \neq j$, and any pair of vectors of membrane potentials $u$ and $u^{\prime}$,
the following inequalities hold.

\indent If $j \notin \mathcal {V}^i$, then
\begin{equation}\label{both}
- \xi_1(\Delta) < \frac { \mP_u (D^{ j\to i }(\Delta))}{ \mP_u (C^{ j\to i }(\Delta))}-\frac {\mP_{u^{\prime}} (B^{ i }(\Delta))}{ \mP_{u^{\prime}} (A^{ i }(\Delta))} < \xi_2(\Delta).
\end{equation}
\indent If $j \in \mathcal {V}^i_-$, then
\begin{equation}\label{inhibitory}
\frac { \mP_u (D^{ j\to i }(\Delta))}{ \mP_u (C^{ j\to i }(\Delta))}-\frac {\mP_{u^{\prime}} (B^{ i }(\Delta))}{ \mP_{u^{\prime}} (A^{ i }(\Delta))}
\leq - \xi_1(\Delta).
\end{equation}
\indent If $j \in \mathcal {V}^i_+$, then
\begin{equation}\label{excitatory}
\xi_2(\Delta) \leq
\frac { \mP_u (D^{ j\to i }(\Delta))}{ \mP_u (C^{ j\to i }(\Delta))}-\frac {\mP_{u^{\prime}} (B^{ i }(\Delta))}{ \mP_{u^{\prime}} (A^{ i }(\Delta))} \, ,
\end{equation}
where
\begin{align}\label{csi1}
\xi_1(\Delta)&=\beta \Delta \left[\frac {1}{5}\tau+(9-\frac {\tau}{10})\frac {d\beta\Delta}{s^2}\, \right]
\intertext{and}
\label{csi2}
\xi_2(\Delta)&=\beta \Delta \left\{\frac {1}{5}\tau+\left[5+3s^2+\frac {\tau}{10}(5-3s^2)\right]\frac {d\beta\Delta}{s^3}\right\}.\
\end{align}

\end{theorem}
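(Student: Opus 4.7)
The plan is to exploit the reset property of the model. After a spike of $i$ at time $\sigma$, the membrane potential satisfies $U^i(t)=\sum_{k\in\mathcal V^i} w_{k\to i}N^k(\sigma,t]$ until the next spike of $i$. Thus, if no presynaptic neuron of $i$ fires in $(\sigma,t]$, then $U^i(t)=0$ and the intensity of $N^i$ equals $\phi_i(0)$; if only $j$ fires there, and does so exactly once, the intensity becomes $\phi_i(w_{j\to i})$. These two regimes drive the two ratios appearing in the theorem: one expects
$\mP_{u'}(B^i(\Delta))/\mP_{u'}(A^i(\Delta))\approx\phi_i(0)\Delta$ and
$\mP_u(D^{j\to i}(\Delta))/\mP_u(C^{j\to i}(\Delta))\approx\phi_i(w_{j\to i})\Delta$,
so that their difference is close to $(\phi_i(w_{j\to i})-\phi_i(0))\Delta$, which by Assumption~\ref{phi}(4) is at least $\delta=\tau\beta$ in absolute value whenever $j\in\mathcal V^i$, with sign determined by whether $j$ is excitatory or inhibitory.

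First I would establish uniform single-slot estimates. Combining \eqref{def:intensity-N-i} with $\alpha\leq\phi_i\leq\beta$ and the second-moment bound $\mE[N^i(a,b](N^i(a,b]-1)\mid\F_a]\leq\beta^2(b-a)^2$ (derived from the compensator of $N^i(N^i-1)$), one obtains
\begin{equation*}
\alpha(b-a)-\tfrac12\beta^2(b-a)^2\ \leq\ \mP(N^i(a,b]>0\mid\F_a)\ \leq\ \beta(b-a).
\end{equation*}
Iterated by sequential conditioning on $\F_\Delta$ and $\F_{2\Delta}$, these estimates yield tight two-sided bounds on $\mP_{u'}(A^i(\Delta))$ and on $\mP_u(C^{j\to i}(\Delta))$, the latter of order $\alpha^2\Delta^2=s^2\beta^2\Delta^2$.

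For the numerators $\mP(B^i(\Delta))$ and $\mP(D^{j\to i}(\Delta))$ I would split each into a \emph{clean} and a \emph{dirty} part. For $B^i$ the clean event is $A^i\cap\{$exactly one spike of $i$ in $(0,\Delta]$ at some time $\sigma$, no presynaptic spike of $i$ in $(\sigma,2\Delta]\}\cap\{N^i(\Delta,2\Delta]>0\}$; for $D^{j\to i}$ the clean event is analogous but requires in addition exactly one spike of $j$ in $(\Delta,2\Delta]$ and no other presynaptic spike of $i$ in $(\sigma,3\Delta]$. On these clean events the intensity of $N^i$ over the final slot is deterministically equal to $\phi_i(0)$ or to $\phi_i(w_{j\to i})$, so the associated probability can be pinned down to $\phi_i(\cdot)\Delta$ up to $O(\Delta^2)$. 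The dirty complement has probability at most $O(d\beta\Delta)$ times the relevant base term, and therefore contributes an error of order $d\beta^2\Delta^2$ times the base probability.

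The main obstacle will be the delicate bookkeeping required to match the explicit constants in \eqref{csi1}--\eqref{csi2}. Dividing the numerator of $\mP_u(D^{j\to i})/\mP_u(C^{j\to i})$ by a denominator of order $s^2\beta^2\Delta^2$ brings a factor $1/s^2$, while extending to the third slot via one more conditioning step accounts for the additional $1/s$, producing the $d\beta\Delta/s^3$ in $\xi_2$; dividing by $\mP_{u'}(A^i)$ analogously yields the $d\beta\Delta/s^2$ in $\xi_1$. The numerical coefficients $9$ and $5+3s^2$, together with the leading factor $\tfrac15\tau$ in place of $\tau$, arise from tracking the various multiple-spike and non-reset configurations, and in particular from controlling the residual dependence on the unknown initial vectors $u,u'$ through the first slot's dynamics. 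Finally, the threshold $\Delta^*=s^3\tau/(34d\beta)$ is precisely the value at which every second-order error drops below the $\tfrac15\tau$ leading signal, thereby preserving the sign of the difference across all three regimes $j\notin\mathcal V^i$, $j\in\mathcal V^i_-$, $j\in\mathcal V^i_+$.
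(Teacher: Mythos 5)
Your high-level strategy --- exploit the reset, isolate events on which the rate of $N^i$ in the last slot is pinned at $\phi_i(0)$ or $\phi_i(w_{j\to i})$, treat the remainder as an $O(d\beta^2\Delta^2)$ correction, and tune $\Delta^*$ so the signal $\tau\beta\Delta$ dominates --- is essentially the one the paper follows. The paper differs in its technical realization: it builds all $N^i$ from a single Poisson random measure, which produces sandwiching Poisson measures $\underline N^i\leq N^i\leq\overline N^i$ of rates $\alpha,\beta$; the one-slot probabilities thus come out in closed exponential form $1-\-e^{-\alpha\Delta}\leq\cdot\leq 1-\-e^{-\beta\Delta}$ and $\-e^{-d\beta\Delta}\leq\cdot\leq\-e^{-d\alpha\Delta}$, which is what makes the explicit bookkeeping in $\xi_1,\xi_2$ tractable. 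Also, the paper's ``tilde'' events impose only ``no presynaptic spike'' (not ``exactly one spike of $i$''), and the $\tau/5$ in \eqref{csi1}--\eqref{csi2} is not an outcome of the spike-configuration count but a deliberately inserted $\pm\tau/10$ buffer around Lemma~\ref{probability}, put there so that the estimation step in Theorem~\ref{main} has room to work.

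The substantive gap is in your handling of $j\in\mathcal V^i_\pm$. On your ``clean'' $D^{j\to i}$ event, $U^i(2\Delta)$ is \emph{not} determined: nothing excludes a spike of $i$ in $(\Delta,2\Delta]$ \emph{after} $j$'s spike, and since $i\notin\mathcal V^i$ such a spike is perfectly compatible with ``no other presynaptic spike of $i$''. But a spike of $i$ resets $U^i$ to $0$, so the intensity entering the last slot would be $\phi_i(0)$, not $\phi_i(w_{j\to i})$. The claim that ``the intensity of $N^i$ over the final slot is deterministically equal to $\phi_i(w_{j\to i})$'' on the clean event is therefore unjustified, and without it the one-sided inequalities \eqref{inhibitory} and \eqref{excitatory} do not follow. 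To fix this you must either add to the clean event the constraint that $i$ does not spike after $j$'s last spike in $(\Delta,2\Delta]$ (an extra $O(\beta\Delta)$ pruning factor that must be traced through the constants), or carry these configurations as an additional residual. The paper's Lemma~\ref{stimasugiu} confronts the same point by conditioning on $U(2\Delta)=v$ with $v^i$ restricted to a half-line $I_{ij}$, asserting that $C^{ji}_-$ forces $U^i(2\Delta)\in I_{ij}$; the same post-$j$ self-spike of $i$ is the tension point in that assertion, and you should be aware of it when filling in the details.
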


\noindent In the rest of this section, we will always take $\Delta=\Delta^{*}$.\\

\noindent Theorem \ref{differenz} suggests estimation algorithm based on a partition of the observation time in slots of fixed length $\Delta^{*}$.  Given two neurons $i \in I$ and $j \in I$, with $i \neq j$, we set
$$
A^{ i }_1=A^{ i }(\Delta^{*}), \quad B^{ i }_1=B^{ i }(\Delta^{*}), \quad C^{j\to i }_1=C^{j\to i }(\Delta^{*}), \quad  D^{j\to i }_1=D^{j\to i }(\Delta^{*}),
$$
and likewise, for any positive integer $k> 1$, we  define the events
\begin{equation*}
A^{ i }_k= \{ N^{i}((2k-2)\Delta^{*} ,(2k-1)\Delta^{*}  ] >0 \},
\end{equation*}
\begin{equation*}
B^{ i }_k= A^{ i }_k \cap \{  N^{i}((2k-1)\Delta^{*}, 2k\Delta^{*}  ] >0   \} ,
\end{equation*}
\begin{equation*}
C^{ j\to i }_{k}= \{ N^i((3k-3)\Delta^{*}, (3k-2)\Delta^{*}]>0, \; N^j  ((3k-2)\Delta^{*}, (3k-1)\Delta^{*}  ] >0 \} ,
\end{equation*}
\begin{equation*}
D^{ j\to i }_{k}=C^{ j\to i }_{k} \cap \{N_i((3k-1)\Delta^{*}, 3k \Delta^{*}] >0\}.
\end{equation*}

For any integer $n \ge 1$, we define 
$$
S^{ A^i}(n)=\sum_{k=1}^n \mathbf{ 1}_{A_{k}^i }, \quad
 S^{B^i}(n)=\sum_{k=1}^n \mathbf{ 1}_{B_{k}^{i}},
 $$
 $$
 S^{C^{j\to i}}(n)=\sum_{k=1}^n \mathbf{ 1}_{C_{k}^{j\to i} },\quad
 S^{ D^{j\to i}}(n)=\sum_{k=1}^n \mathbf{ 1}_{D_{k}^{j\to i} }\,,
$$
and, for any  integer $m\geq 1$, we define 
\[
K^i_m= \inf  \{ n \ge 1 : S^{ A^i}(n)= m\},
\]
\[
H^{j\to i}_m= \inf  \{ n \ge 1 : S^{C^{j\to i}}(n) =m \}.
\]

Now suppose that the processes are observed up to a time horizon $T=3\Delta^{*}n$ and define
\begin{equation}\label{parameters}
t_n=\left\lceil \alpha \Delta^{*}n \right\rceil ,\, \, m_n=\left\lceil \frac {19}{20}  \, \alpha^2(\Delta^{*})^2(1-\frac {\tau}{10} \sqrt {\alpha \Delta^{*}})n\right\rceil\, .
\end{equation}
\bigskip

Once these parameters are set, we can define the empirical ratios
\begin{align}\label{erre-1-2}
R^i(n)&=\begin{cases}
\dfrac{S^{B^i}(K^i_{m_n})}{m_n}\ , & \mbox{\, if\, } K^i_{m_n}\le t_n
\vspace{2.5mm}
\\
\dfrac{S^{B^i}(t_n)}{S^{A^i}(t_n)}, & \mbox{\, if\, } K^i_{m_n} > t_n,
\end{cases}
\intertext{and}
\label{gi-1-2}
G^{j\to i}(n)&=
\begin{cases} \dfrac{S^{D^{j\to i}}(H^{j\to i}_{m_n})}{m_n}\ , & \mbox{\, if\, } {H^{j\to i}_{m_n}} \le n
\vspace{2.5mm}
\\
\dfrac{S^{D^{j\to i}}(n)}{S^{C^{j\to i}}(n)}\ ,& \mbox{\, if\, } {H^{j\to i}_{m_n}} > n.
\end{cases}
\end{align}

For any pair of neurons $i\neq j$,  the statistics $R^i(n)$
and $G^{j\to i}(n)$ will be used to identify whether $j$ is presynaptic to $i$ or not. They are ratio estimators that are stopped once their denominator reaches the level $m_n$, but differently from $G^{j \to i}(n)$, which is allowed to reach this level up to the expiration of the whole time horizon $T=3\Delta^{*}n$, the estimator $R^i(n)$ is stopped at most after the time $2\Delta^*t_n$ is expired. Since a rough upper bound for $\alpha \Delta^{*}$ is $\frac{1}{34d}$, this time is much smaller than $T$: therefore a much smaller interval of time is sufficient for $R^i(n)$ to reach the same accuracy as $G^{j \to i}(n)$.

Inspired by Theorem \ref{differenz}  we define the estimated sets
$\hat{\mathcal {V}}^{i}_{+}(n)$, $\hat {\mathcal {V}}^{i}_{- }(n)$, and $\hat {\mathcal {V}}^{i}(n)$, as follows
\begin{equation}\label{V-i-minus}
\hat {\mathcal {V}}^{i}_{- }(n)=\{j \in I \setminus \{i\}:  G^{j\to i}(n) -R^i(n) \leq -  \xi_1(\Delta^*) \}\,
\end{equation}
\begin{equation}\label{V-i-plus}
\hat{\mathcal {V}}^{i}_{+}(n)=\{j \in I\setminus \{i\}:  G^{j\to i}(n) - R^{i}(n) \geq  \xi_2 (\Delta^*) \}\, , \,
\end{equation}
\begin{equation}\label{V-i}
\hat {\mathcal {V}}^i(n)= \hat{\mathcal {V}}^{i}_{+}(n) \cup \hat {\mathcal {V}}^{i}_{- }(n)\, .
\end{equation}

We can now state our main theorem, in which $\mP$ stands for a probability measure on the spiking processes with an arbitrary law of the vector of the membrane potentials.

\begin{theorem}\label{main} Let $T=3 \Delta^* n$, where  $\Delta^{*}= \frac {s^3\tau}{34 d\beta}$ and $n$ is a positive integer. Let
$R^i(n)$ and $G^{j\to i}(n)$ be the estimators defined in equations \eqref{erre-1-2} and \eqref{gi-1-2}, and $\hat {\mathcal {V}^{i}}(n)$, $\hat {\mathcal {V}^{i}}_{-}(n)$ and $\hat {\mathcal {V}^{i}}_{+}(n)$ be defined as in \eqref{V-i}, \eqref{V-i-minus}, \eqref{V-i-plus}. Then the following inequalities hold:
$$
\textit{if     } j \notin \mathcal {V}^{i} \textit{     then    } \mP \big(j \not \in  \hat {\mathcal {V}}^{i}(n)\big) \geq 1-6\-e^{-\omega T}  ;
$$
$$
\textit{if     } j \in \mathcal {V}^i_{-}\textit{     then    } \mP \big(j \in  \hat {\mathcal {V}}^{i}_{-}(n)\big) \geq 1-4\-e^{-\omega T} ;
$$
$$
\textit{if     } j \in \mathcal {V}^{i}_{+ }\textit{     then    } \mP \big(\hat {j \in \mathcal {V}^i}_{+}(n)\big) \geq 1- 4\-e^{-\omega T},
$$
where
$$
\omega = \vartheta_0 \, \frac{\tau^4 s^{9} \beta}{d^2},
$$
$\vartheta_0$ being a computable universal constant.
\end{theorem}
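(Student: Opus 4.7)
The strategy is to combine Theorem~\ref{differenz}, which yields deterministic bounds on the differences of the true ratios, with exponential concentration inequalities for the empirical statistics $R^i(n)$ and $G^{j\to i}(n)$ around their targets. The initial step is to observe that, by Assumption~\ref{phi}(2)--(3) and a coupling of each counting process $N^i$ with Poisson processes of rates $\alpha$ and $\beta$, the one-block conditional probabilities
\[
p^A_k := \mP(A^i_k \mid \F_{(2k-2)\Delta^*}), \qquad p^B_k := \mP(B^i_k \mid \F_{(2k-2)\Delta^*}),
\]
and analogously $p^C_k,p^D_k$ (based on $\F_{(3k-3)\Delta^*}$), lie in explicit deterministic intervals: $p^A_k \in [1-e^{-\alpha\Delta^*},\,1-e^{-\beta\Delta^*}]$, while $p^C_k$ and $p^D_k$ are of order $(\alpha\Delta^*)^2$ and $(\alpha\Delta^*)^3$, respectively. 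Consequently $\{\mathbf{1}_{A^i_k}-p^A_k\}_{k\ge 1}$ and its three counterparts are uniformly bounded martingale differences for the natural filtration.

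\medskip

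\noindent Apply Azuma--Hoeffding to the four martingales $M^X_n=S^X(n)-\sum_{k\le n}p^X_k$ for $X\in\{A^i,B^i,C^{j\to i},D^{j\to i}\}$. Combined with the deterministic lower bounds on the $p^X_k$, this yields the event $\{K^i_{m_n}\le t_n\}\cap\{H^{j\to i}_{m_n}\le n\}$ with probability at least $1-c\,e^{-\omega T}$. The parameters $t_n$ and $m_n$ in \eqref{parameters} are calibrated so that $t_n(1-e^{-\alpha\Delta^*})$ exceeds $m_n$ by a definite margin and $n\cdot\inf p^C_k$ exceeds $m_n$ likewise; the correction $(1-\tau\sqrt{\alpha\Delta^*}/10)$ in $m_n$ is what supplies that margin in the delicate regime of small $\alpha\Delta^*$. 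On this good event the estimators simplify to $R^i(n)=S^{B^i}(K^i_{m_n})/m_n$ and $G^{j\to i}(n)=S^{D^{j\to i}}(H^{j\to i}_{m_n})/m_n$, both measurable on a time window of bounded length.

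\medskip

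\noindent Optional stopping applied to $\sum_k(\mathbf{1}_{B^i_k}-p^B_k)$ and $\sum_k(\mathbf{1}_{D^{j\to i}_k}-p^D_k)$ at $K^i_{m_n}$ and $H^{j\to i}_{m_n}$, together with Azuma--Hoeffding and the deterministic bounds $K^i_{m_n}\le t_n$, $H^{j\to i}_{m_n}\le n$, yields
\[
\mP\bigl(|R^i(n)-\mP_u(B^i)/\mP_u(A^i)|>\varepsilon\bigr)\le c\,e^{-c' m_n \varepsilon^2},
\]
and analogously for $G^{j\to i}(n)$. Applying Theorem~\ref{differenz} pointwise to the conditional ratios $p^B_k/p^A_k$ and $p^D_k/p^C_k$, the three conclusions of Theorem~\ref{main} follow once $\varepsilon$ is a fixed small fraction of the leading term $\beta\Delta^*\tau/5$ shared by $\xi_1(\Delta^*)$ and $\xi_2(\Delta^*)$. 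Substituting $m_n\asymp(\alpha\Delta^*)^2 n$, $\alpha=s\beta$, $\Delta^*=s^3\tau/(34d\beta)$ and $T=3\Delta^*n$ into the minimum of the four concentration exponents produces, after careful bookkeeping, the common value $\omega T$ with $\omega=\vartheta_0\,\tau^4 s^9 \beta/d^2$.

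\medskip

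\noindent The main obstacle is to keep the exponent sharp while juggling the four concentration bounds. Two aspects are delicate. First, Theorem~\ref{differenz} must be invoked \emph{conditionally} on the past at the start of each three-slot block, so that the strict inequalities in the non-interacting case --- which are the only device separating it from the interacting ones --- translate, uniformly in the initial state, into a quantitative gap between the true conditional difference and the thresholds $\pm\xi_{1,2}(\Delta^*)$. Second, the hitting-time concentration is typically the bottleneck because $p^A_k$ and $p^C_k$ are themselves small, and it is precisely the correction factor $(1-\tau\sqrt{\alpha\Delta^*}/10)$ in the definition of $m_n$, together with the deliberate smallness of $t_n$ compared to the horizon, that aligns the exponent of the hitting-time estimates with that of the ratio estimates, preserving the advertised rate under the final union bound.
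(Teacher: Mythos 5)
The high-level architecture of your proposal (bound the probability of the good events $\{K^i_{m_n}\le t_n\}$, $\{H^{j\to i}_{m_n}\le n\}$ as in Lemma~\ref{howmany}, then concentrate the stopped empirical ratios, then conclude via a union bound and Lemma~\ref{messo}) is the right one. The gap is in the central concentration step. You propose to apply Azuma--Hoeffding to the martingales $M^X_n = S^X(n)-\sum_{k\le n}p^X_k$ over the full sequence of $t_n$ (resp.\ $n$) time slots, and claim the bound $\mP\bigl(|R^i(n)-\mP_u(B^i)/\mP_u(A^i)|>\varepsilon\bigr)\le c\,e^{-c'm_n\varepsilon^2}$. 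This does not follow: Azuma for a martingale of length $\le t_n$ with increments bounded by $1$ gives at best $e^{-c(m_n\varepsilon)^2/t_n}$, and since $m_n\ll t_n$ (indeed $m_n/t_n\approx\alpha\Delta^*$) the exponent is $\approx m_n\alpha\Delta^*\varepsilon^2$, an extra factor $\alpha\Delta^*$ worse than what you wrote. Moreover even your claimed exponent $m_n\varepsilon^2\approx (\alpha\Delta^*)^2(\beta\Delta^*\tau)^2 n$ is strictly smaller than the exponent the theorem needs, which is $\approx(\alpha\Delta^*)^3\tau^2 n$ (this is what produces $\omega=\vartheta_0\tau^4 s^9\beta/d^2$, since $(\alpha\Delta^*)^3\tau^2 n = \text{const}\cdot s^9\tau^4\beta T/d^2$); the ratio of the two is $\beta\Delta^*/s\ll 1$. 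Azuma is blind to the fact that the $B^i_k$'s are rare events, and that loss of the small-probability factor is fatal to the advertised rate.

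What the paper does instead is to change the sample index: rather than controlling $\sum_{k\le t_n}\mathbf{1}_{B^i_k}$ as a process in the time slot $k$, it considers the subsequence $Y'_h=\mathbf{1}_{B^i_{K^i_h}}$, $h=1,\dots,m_n$, i.e.\ the $B^i$-indicators \emph{only at the $m_n$ slots where $A^i$ occurs}. By the Markov property, $\mP(Y'_{h+1}=1\mid Y'_1,\dots,Y'_h)$ equals the conditional ratio $\mP_{U(2(\ell-1)\Delta^*)}(B^i_1)/\mP_{U(2(\ell-1)\Delta^*)}(A^i_1)$ at the random slot $\ell=K^i_{h+1}$, which by Lemma~\ref{probability} (not Theorem~\ref{differenz} directly) lies in an interval of the form $\phi_i(0)\Delta^*(1\pm O(d\beta\Delta^*/s^2))$. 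One then dominates/is-dominated-by a binomial with $m_n$ trials and success probability of order $\phi_i(0)\Delta^*\ge\alpha\Delta^*$, and the multiplicative Chernoff bound (Proposition~\ref{bennett} via Corollary~\ref{domination}) gives exponent $\approx\tfrac{1}{2}m_n\alpha\Delta^*(\tau/10)^2$, which \emph{does} have the extra $\alpha\Delta^*$ factor that Azuma misses and reproduces $\sigma(n)$ and hence $\omega$. To repair your argument you would need to abandon Azuma--Hoeffding in favor of this conditional re-indexing plus a variance-sensitive (Bennett/Chernoff-type) bound, or at minimum a Freedman-type martingale Bernstein inequality applied to the $Y'_h$ subsequence — the plain Hoeffding form cannot produce the stated constant.
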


\begin{rem}  
From \eqref{s-tau} the constant $ \frac{\tau^4 s^{9} \beta}{d^2}$ can be rewritten as $\frac{\delta^4\alpha^9}{d^2 \beta^{12}}$.
When only  positive lower bounds are available for the parameters $\alpha$ and $\delta$, and only  upper bounds  for  $\beta$ and~$d$, at least a lower bound for $\omega$  can be obtained.
\end{rem}

\section{Proof of Theorem \ref{differenz}}\label{proof1}

The proof is based on a particular construction of the counting measures $N^{i}$, and the membrane potential processes $U^i$, $ i\in I$, in such a way that \eqref{def:U-i}  and \eqref{def:intensity-N-i}  hold.

 We consider  a  Poisson measure on $[0,\infty)\times [0,\beta]\times I$,
 $$
 \mathcal{N}(dt, dx,dz)= \sum_{k\geq 1} \delta_{(\mathcal{T}_k, X_k,Z_k)}(dt,dx,dz), 
 $$
 with intensity measure
 $$
 \mu_{\mathcal{N}}(dt\times dx\times \{i\})=dt\times dx,
 \quad \text{$i\in I$.}
 $$
 Without loss of generality we assume that  $0<\mathcal{T}_1<\mathcal{T}_2<...$. 
  Observe that the 
  marks $X_k$, $k\geq 1$,  are independent and uniform in $[0, \beta]$.
  The  $\mathcal{T}_k$'s are  candidates to be spiking times for neurons:
 $\mathcal{T}_1$ is accepted as a spike for neuron $j$ if and only if
$$
Z_1=j, \quad X_1\leq \phi_j(U^j(0)).
$$
If this is the case then $T^j_1=\mathcal{T}_1$, and  the potential vector is updated in the interval $[\mathcal{T}_1,\mathcal{T}_2)$ according to
\begin{align*}
U^j(t)=0, \quad
U^i(t)=  U^i(0)+w_{j\to i}, \quad i\neq j;
\end{align*}
otherwise
$U(t)=U(0)$, 
and the construction proceeds with the next candidate time.

Recursively in $k$ the candidate time
 $\mathcal{T}_k$ is accepted as a spike for neuron $Z_k$ if and only if
$$
 \quad X_k\leq \phi_{Z_k}(U^{Z_k}(\mathcal{T}_k)),
$$
If  this is the case the potential vector  is updated in the interval $[\mathcal{T}_k,\mathcal{T}_{k+1})$ according to
\begin{align*}
U^{Z_k}(t)=0, \quad U^i(t)=  U^i(\mathcal{T}_k)+w_{Z_{k}\to i}, \quad i\neq Z_k;
\end{align*}
otherwise
 $U(t)=U(\mathcal{T}_k)$, and the construction proceeds with the next candidate time.

Next, for each neuron $i \in I$, and $x\in (0,\beta]$, define the  Poisson measures   $N^{i,x}(t,t^\prime]:= \mathcal{N}((t,t^\prime]\times [0,x]\times\{i\}),$ with intensity measure $x\, dt$, on $(0,\infty)$.

From now on we set
$$
\overline{N}^i=N^{i,\beta}, \quad \underline{N}^i=N^{i,\alpha}
$$
so that for any $i\in I$
$$
\underline{N}^{i}(t,t^\prime]\leq N^{i}(t,t^\prime]\leq \overline{N}^{i}(t,t^\prime], \quad 0\leq t< t'.
$$
It is also convenient to use the notation
$$
N^{\mathcal {W}}(t,t^\prime]=\sum_{j \in \mathcal {W}}N^j(t,t^\prime],\quad  \quad 0\leq t< t', \quad \mathcal {W}\subset I,
$$
and the same notation for the measures $\underline{N}^j$, and  $\overline{N}^j$.

For each pair of neurons $i\neq j$, and for each positive real number $\Delta$ we now define the events
$$
\tilde A^i(\Delta)=A^i(\Delta)\cap\left\{N^{\mathcal {V}^i}(0,2\Delta]=0\right\},
$$
$$
\tilde B^i(\Delta)=B^i(\Delta) \cap\left\{ N^{\mathcal {V}^i}(0,2\Delta]=0\right\},
$$

$$
\tilde {C}^{j\to i}(\Delta)= {C}^{j\to i}(\Delta)\cap\left\{N^{\mathcal {V}^i}(0,\Delta]=N^{\mathcal {V}^i \setminus \{j\}}(\Delta, 2\Delta]=N^{\mathcal {V}^i}(2\Delta, 3\Delta]=0\right\},
$$
$$
\tilde{D}^{j\to i}(\Delta)={D}^{j\to i}(\Delta)\cap \left\{ N^{\mathcal {V}^i}(0,\Delta]=N^{\mathcal {V}^i \setminus \{j\}}(\Delta, 2\Delta]=N^{\mathcal {V}^i}(2\Delta, 3\Delta]=0\right\},
$$
where $A^i(\Delta)$, $B^i(\Delta)$, ${C}^{j\to i}(\Delta)$, and ${D}^{j\to i}(\Delta)$ have been defined in \eqref{eventoA0}, \eqref{eventoB0}, \eqref{eventoC0}, and \eqref{eventoD0}, respectively.

\begin{lemma}\label{sopraesotto}
 Irrespectively of the vector $u$ of membrane potentials, for any $\Delta>0$, 
the following inequalities hold.
\begin{equation}\label{pBAtilde}
\frac{\mP_u(\tilde B^i(\Delta))}{\mP_{u} (\tilde A^i(\Delta) )}
\left( 1- \frac{1-\-e^{-2d \beta  \Delta }}{s}\right)
\leq
\frac{\mP_u(B^i(\Delta))}{\mP_u (A^i(\Delta))}
  \leq
\left(1+\frac{\-e^{2d \beta  \Delta }-1}{s^{2}}\right)
\frac{\mP_u( \tilde B^i(\Delta))}{\mP_u(\tilde A^i(\Delta) )}
\end{equation}

\begin{equation}\label{pCDtilde}
 \frac{\mP_u(\tilde{D}^{j\to i}(\Delta))}{\mP_u (\tilde{C}^{j\to i}(\Delta) ) }
\left (1- \frac{1-\-e^{-3d \beta  \Delta }}{s^{2}}\right)
\leq
\frac{\mP_u(D^{j\to i}(\Delta))}{\mP_u ( C^{j\to i}(\Delta))}
\leq
\left (1+\frac{\-e^{3d \beta  \Delta }-1}{s^{3}}\right)
\frac{\mP_u(\tilde{D}^{j\to i}(\Delta))}{\mP_u (\tilde{C}^{j\to i}(\Delta) ) }
\end{equation}
\end{lemma}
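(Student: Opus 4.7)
The plan is to establish both double inequalities by the same decomposition. Let $E=\{N^{\mathcal{V}^i}(0,2\Delta]=0\}$, so that $\tilde A^i=A^i\cap E$ and $\tilde B^i=B^i\cap E$; likewise let $F$ denote the three-slot analogue defining the tildes in \eqref{pCDtilde}. I will express each probability entering the ratios as ``tilded part plus error on the complementary event'' and control the errors by pure-Poisson bounds that factorise through disjoint slices of the reference Poisson measure~$\mathcal N$.

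The mechanism rests on the following independence, which is a direct consequence of the coupling construction and of $i\notin \mathcal{V}^i$: the processes $\underline N^i$ and $\overline N^i$ are measurable with respect to the restriction of $\mathcal N$ to marks with label $Z=i$, while $\overline N^{\mathcal{V}^i}$ is measurable with respect to the restriction to marks with $Z\in\mathcal{V}^i$, and these two restrictions are independent Poisson measures. Using the pathwise sandwich $\underline N^k\le N^k\le \overline N^k$ together with this independence, I obtain the bounds
\[
\mP_u(\tilde B^i)\ge (1-e^{-\alpha\Delta})^2\, e^{-2d\beta\Delta},\qquad \mP_u(B^i\cap E^c)\le (1-e^{-\beta\Delta})^2(1-e^{-2d\beta\Delta}),
\]
and analogous ones with $A^i$ in place of $B^i$. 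Dividing the second by the first and applying $(1-e^{-\beta\Delta})/(1-e^{-\alpha\Delta})\le \beta/\alpha=1/s$ gives $\mP_u(B^i\cap E^c)\le (e^{2d\beta\Delta}-1)s^{-2}\,\mP_u(\tilde B^i)$, so that $\mP_u(B^i)\le \bigl(1+(e^{2d\beta\Delta}-1)/s^2\bigr)\mP_u(\tilde B^i)$; together with $\mP_u(A^i)\ge \mP_u(\tilde A^i)$, this yields the upper inequality in \eqref{pBAtilde}. The lower inequality is its mirror image: the same factorisation gives $\mP_u(A^i\cap E^c)/\mP_u(A^i)\le (1-e^{-2d\beta\Delta})/s$, i.e.\ $\mP_u(\tilde A^i)\ge \bigl(1-(1-e^{-2d\beta\Delta})/s\bigr)\mP_u(A^i)$, and one concludes using $\mP_u(\tilde B^i)\le \mP_u(B^i)$.

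Inequality \eqref{pCDtilde} is proved by the same template with $F$ in place of $E$ and $\mP_u(F)\ge e^{-(3d-1)\beta\Delta}\ge e^{-3d\beta\Delta}$; the asymmetric exponents $s^{-3}$ and $s^{-2}$ arise because $D^{j\to i}$ involves three constituent spikes (two of $i$, one of $j$) while $C^{j\to i}$ involves only two. The point that requires the most care throughout is the independence book-keeping: the actual count $N^{\mathcal{V}^i}$ is a history-dependent thinning and is therefore not literally independent of $N^i$, so before invoking any product formula one must systematically enlarge each event of the form $\{N^{\mathcal{V}^i}(\cdot)>0\}$ (resp.\ $=0$) to its pure-Poisson majorant $\{\overline N^{\mathcal{V}^i}(\cdot)>0\}$ (resp.\ minorant $\{\overline N^{\mathcal{V}^i}(\cdot)=0\}$). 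After this substitution, the disjointness of the labels $\{i\}$, $\{j\}$, $\mathcal{V}^i\setminus\{j\}$ and of the three time slots makes the required independences manifest, and both inequalities fall out from elementary ratio manipulations.
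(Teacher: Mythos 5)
Your proof is correct and takes essentially the same approach as the paper: both rest on the Poisson sandwich $\underline N \le N \le \overline N$, the independence of the restrictions of $\mathcal{N}$ to disjoint label/time slices, and the decomposition of each probability into its tilded part plus the mass on the complement, with the errors controlled by the same per-factor bound $(1-e^{-\beta\Delta})/(1-e^{-\alpha\Delta})\le 1/s$. The paper merely packages the ratio algebra in an explicit two-sided inequality for nested events $E\subset F$ intersected with a common event $G$, but the underlying inclusions and bounds coincide with yours.
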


\begin{proof}
First observe that for any triple of events $E$, $F$ and $G$ such that
$E\subset F$  the following inequalities hold
$$
\frac{\mP (E\cap G)}{\mP(F\cap G)}\, \left(1- \frac{\mP(F\setminus G)}{\mP(F)} \right) \leq \frac{\mP (E)}{\mP(F)}
\leq
\left( 1+\frac{\mP (E\setminus G)}{\mP(E\cap G)} \right)\, \frac{\mP (E\cap G)}{\mP(F\cap G)}.
$$
Choosing
$$E=B^i(\Delta),\quad F= A^i(\Delta), \quad G=\{N^{\mathcal {V}^i}(0,2\Delta]=0\}
$$
first, and  then
$$
E=D^{j\to i}(\Delta),\; F= C^{j\to i}(\Delta),\; G=\{N^{\mathcal {V}^i}(0,\Delta]=N^{\mathcal {V}^i \setminus \{j\}}(\Delta, 2\Delta]=N^{\mathcal {V}^i}(2\Delta, 3\Delta]=0\},
$$
 the inequalities \eqref{pBAtilde} and \eqref{pCDtilde} are consequences of the bounds
$$
\frac{\mP_u   ( A^i(\Delta) \setminus \tilde{A}^i(\Delta)  )   }{\mP_u( A^i(\Delta))}
\leq
\frac{  (1-\-e^{-2d \beta  \Delta }) (1- \-e^{-\beta  \Delta})
 }{   1- \-e^{-\alpha \Delta}}
\leq
\frac {\beta}{\alpha}(1-\-e^{-2d \beta  \Delta }),
$$
and
$$
\frac{\mP_u   (B^i(\Delta) \setminus \tilde{B}^i(\Delta) )   }{\mP_u(\tilde{B}^i(\Delta)) }
\leq \-e^{2d \beta  \Delta }\left(\frac{    1- e^{- \beta \Delta }     }{    1- e^{- \alpha \Delta }   }\right)^2(1-\-e^{-2d \beta  \Delta })
\leq \left( \frac {\beta}{\alpha} \right)^2(\-e^{2d \beta  \Delta }-1).
$$
 The former is due to
$$ 
A^i(\Delta) \setminus \tilde{A}^i(\Delta)\subset \{\overline N^i(0,\Delta]>0, \overline N^{\mathcal {V}^i}(0,2\Delta]>0\},\,\,\, \{\underline N^i(0, \Delta]>0\} \subset A^i(\Delta), 
$$ and the majorization
$$
\frac{   1-e^{-  \beta x }   } {   1-e^{-  \alpha x }   }\leq \frac {\beta}{\alpha},\,\, x\geq 0,
$$
valid as long as $\beta \geq \alpha$, in view of the fact that  the function $x\mapsto \frac{e^x-1}{x}$   is increasing in $\mathbb{R}$. The latter is due to
$$ 
B^i(\Delta) \setminus \tilde{B}^i(\Delta)\subset \{\overline N^i(0,\Delta]>0, \overline N^i(\Delta, 2\Delta]>0, 
\overline N^{\mathcal {V}^i}(0,2\Delta]>0\}
$$
and 
$$
\{\overline N^{\mathcal {V}^i}(0,2\Delta]=0,\,\,\, \underline N^i(0, \Delta]>0, \underline N^i(\Delta, 2\Delta]>0\} \subset {\tilde B}^i(\Delta). 
$$

With similar arguments one proves that

$$
\frac{\mP_u   (C^{j\to i}(\Delta)\setminus \tilde{C}^{j\to i}(\Delta) )   }{\mP_u( C^{j\to i}(\Delta)) }
\leq
\frac{   (1-e^{-  \beta \Delta })^2(1-\-e^{-3d \beta  \Delta })   }{ (1-e^{-  \alpha \Delta })^2 } \leq \left (\frac {\beta}{\alpha}\right)^2(1-\-e^{-3d \beta  \Delta })
$$
$$
\frac{\mP_u   ( D^{j\to i}(\Delta)\setminus \tilde{D}^{j\to i}(\Delta) )   }{\mP_u (\tilde{D}^{j\to i}(\Delta)) }
 \leq \-e^{3d \beta  \Delta }\left(\frac{   1-e^{-  \beta \Delta }    }{ 1-e^{-  \alpha \Delta } }\right)^3
(1-\-e^{-3d \beta  \Delta }) \leq \left ( \frac {\beta}{\alpha}\right )^3(\-e^{3d \beta  \Delta }-1).
$$
\end{proof}

\begin{lemma}\label{stimasugiu}
	Irrespectively of the  vector $u$ of membrane potentials, for any $\Delta >0$, one has
	\begin{equation}\label{PBA}
	\big( 1-\-e ^{- \phi_i (0) \Delta}\big)\-e^{-d(\beta-\alpha)\Delta}\leq
\frac{ \mP_u \big( \tilde{B}^{i}(\Delta)\big)}{\mP_u \big(\tilde{A}^{i}(\Delta)  \big)} \leq
\big(1-\-e ^{- \phi_i (0) \Delta}\big) \-e^{d(\beta-\alpha)\Delta}.
	\end{equation}
Moreover, provided $j \notin \mathcal {V}^i$
\begin{equation}\label{PDC}
	\left( 1-\-e ^{- \phi_i (0) \Delta}\right)\-e^{-d(\beta-\alpha)\Delta} \leq
\frac {\mP_u \big( \tilde{D}^{j\to i }(\Delta)\big)}{\mP_u \big( \tilde{C}^{j\to i }(\Delta)   \big)} \leq
\left (1-\-e ^{- \phi_i (0) \Delta}\right) \-e^{d(\beta-\alpha)\Delta},
	\end{equation}
whereas if $j \in \mathcal {V}^i_+$
	\begin{equation}\label{PDC>}
  \frac {\mP_u \big( \tilde{D}^{j\to i }(\Delta)\big)}{\mP_u \big( \tilde{C}^{j\to i }(\Delta)   \big)}  \geq
(1-\-e^{- (\phi_i (0)+\delta )\Delta})\-e^{-d(\beta-\alpha)\Delta},
	\end{equation}
and if $j \in \mathcal {V}^i_-$
	\begin{equation}\label{PDC<}
  \frac {\mP_u \big( \tilde{D}^{j\to i }(\Delta)\big)}{\mP_u \big( \tilde{C}^{j\to i }(\Delta)   \big)} \leq
(1-\-e^{- (\phi_i (0)   - \delta )\Delta}) \-e^{d(\beta-\alpha)\Delta}.
	\end{equation}
\end{lemma}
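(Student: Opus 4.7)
The plan is to prove \eqref{PBA} in detail and to indicate how the other three inequalities follow by minor adaptations. The overall strategy is to combine the strong Markov property with the Poisson candidate construction of Section~\ref{proof1}, so as to decouple the thinning mechanism of neuron $i$ from that of its presynaptic set $\mathcal{V}^{i}$.

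First I would condition on $\F_\Delta$. On the event $F= A^i(\Delta)\cap\{N^{\mathcal{V}^{i}}(0,\Delta]=0\}$ one has $U^i(\Delta)=0$, because the last spike of $i$ in $(0,\Delta]$ resets $U^i$ and no presynaptic spike intervenes. Setting $g(v)=\mP_v(N^{\mathcal{V}^{i}}(0,\Delta]=0)$ and $h(v)=\mP_v(N^{i}(0,\Delta]>0,\, N^{\mathcal{V}^{i}}(0,\Delta]=0)$, the Markov property yields
\[
\mP_u(\tilde A^{i}(\Delta))=\mE_u[\1_F\, g(U(\Delta))],\qquad \mP_u(\tilde B^{i}(\Delta))=\mE_u[\1_F\, h(U(\Delta))],
\]
so, since the ratio of such weighted averages inherits any pointwise bound on $h/g$, it is enough to prove that for every $v$ with $v^i=0$,
\[
(1-e^{-\phi_i(0)\Delta})\,e^{-d(\beta-\alpha)\Delta}\;\le\; \frac{h(v)}{g(v)}\;\le\; (1-e^{-\phi_i(0)\Delta})\,e^{d(\beta-\alpha)\Delta}.
\]

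Fix such a $v$. Let $\omega$ be the restriction to $(0,\Delta]$ of the $i$-marked piece of $\mathcal{N}$, namely the set of pairs $(\mathcal{T}_k,X_k)$ with $Z_k=i$, and let $\tilde\omega(\omega)=\{\mathcal{T}_k:X_k\le\phi_i(0)\}$. On $\{N^{\mathcal{V}^{i}}(0,\Delta]=0\}$ the potential $U^i$ is identically $0$, so the spike pattern of $i$ in $(0,\Delta]$ coincides deterministically with $\tilde\omega(\omega)$. Using the independence of the $i$- and $\mathcal{V}^{i}$-candidate sub-measures together with the tower property, one obtains
\[
g(v)=\mE_v[q(\omega)],\qquad h(v)=\mE_v[\1_{\tilde\omega(\omega)\neq\vuoto}\, q(\omega)],\qquad q(\omega):=\mP_v(N^{\mathcal{V}^{i}}(0,\Delta]=0\mid\omega),
\]
and a standard Poisson-thinning computation gives $\mE_v[\1_{\tilde\omega(\omega)\neq\vuoto}]=1-e^{-\phi_i(0)\Delta}$, because $\tilde\omega$ is a PPP of rate $\phi_i(0)$ on $(0,\Delta]$. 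The crucial third step is a $\omega$-uniform two-sided bound on $q(\omega)$. Let $M$ be the number of $\mathcal{V}^{i}$-candidates in $(0,\Delta]$: it is Poisson$(|\mathcal{V}^{i}|\beta\Delta)$ and independent of $\omega$. From $\{M=0\}\subset\{N^{\mathcal{V}^{i}}(0,\Delta]=0\}$ one gets $q(\omega)\ge e^{-|\mathcal{V}^{i}|\beta\Delta}$; for the upper bound, processing the $\mathcal{V}^{i}$-candidates in time order and using at each step the uniform bound $1-\phi_{Z}(U^{Z}(\mathcal{T}))/\beta\le 1-\alpha/\beta$ on the conditional rejection probability (the marks being independent uniforms on $(0,\beta)$), one obtains $q(\omega)\le\mE[(1-\alpha/\beta)^M]=e^{-|\mathcal{V}^{i}|\alpha\Delta}$. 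Since $|\mathcal{V}^{i}|\le d$, $q_{\max}/q_{\min}\le e^{d(\beta-\alpha)\Delta}$; inserting this together with the thinning identity for $\mE_v[\1_{\tilde\omega(\omega)\neq\vuoto}]$ into $h(v)/g(v)$ produces exactly \eqref{PBA}.

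For \eqref{PDC}, \eqref{PDC>} and \eqref{PDC<} the same scheme is run at time $2\Delta$ instead of $\Delta$. On $\tilde C^{j\to i}(\Delta)$, the reset of $U^i$ by its spike in $(0,\Delta]$ together with the absence of $\mathcal{V}^{i}\setminus\{j\}$-spikes in $(\Delta,2\Delta]$ forces $U^i(2\Delta)=w_{j\to i}\,N^j(\Delta,2\Delta]$. When $j\notin\mathcal{V}^{i}$, $w_{j\to i}=0$ so $U^i(2\Delta)=0$ and the previous argument applies verbatim, yielding \eqref{PDC}. When $j\in\mathcal{V}^{i}_+$ (resp.\ $j\in\mathcal{V}^{i}_-$), the monotonicity of $\phi_i$ combined with Assumption~\ref{phi}(4) gives $\phi_i(U^i(2\Delta))\ge\phi_i(0)+\delta$ (resp.\ $\le\phi_i(0)-\delta$), and the one-sided version of the preceding bound delivers \eqref{PDC>} (resp.\ \eqref{PDC<}). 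The main obstacle I anticipate is the $\omega$-uniform control of $q(\omega)$: the intensities of the neurons in $\mathcal{V}^{i}$ are coupled both among themselves and, through the potentials, to $\omega$, and it is the Poisson candidate construction that precisely untangles these couplings at the level of individual candidates.
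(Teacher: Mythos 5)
Your proof tracks the paper's argument in structure and constants but repackages the central estimate. Both approaches condition at time $\Delta$ (resp.\ $2\Delta$) via the Markov property to reduce to bounding, for each $v$ with $v^i=0$ (resp.\ $v^i$ in the appropriate half-line), a ratio of the form $\mP_v(N^i(0,\Delta]>0,\,N^{\mathcal V^i}(0,\Delta]=0)/\mP_v(N^{\mathcal V^i}(0,\Delta]=0)$, and both exploit the Poisson candidate construction. The paper then uses the inclusions $\{N^{i,\phi_i(0)}(0,\Delta]>0,\,\overline N^{\mathcal V^i}(0,\Delta]=0\}\subset\{N^i(0,\Delta]>0,\,N^{\mathcal V^i}(0,\Delta]=0\}$ and $\{N^{\mathcal V^i}(0,\Delta]=0\}\subset\{\underline N^{\mathcal V^i}(0,\Delta]=0\}$ together with the independence of the $i$- and $\mathcal V^i$-candidate streams, bounding numerator and denominator separately by $(1-e^{-\phi_i(0)\Delta})e^{-d\beta\Delta}$ and $e^{-d\alpha\Delta}$. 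You instead introduce $q(\omega)=\mP_v(N^{\mathcal V^i}(0,\Delta]=0\mid\omega)$, establish the uniform sandwich $e^{-|\mathcal V^i|\beta\Delta}\le q(\omega)\le e^{-|\mathcal V^i|\alpha\Delta}$ by the candidate-rejection argument, and push it through $\mE[\1_{\tilde\omega\neq\vuoto}q]/\mE[q]$. This is a legitimate reorganization, encodes the same two ideas, and yields identical constants. Two small caveats: (i) your statement that on $\{N^{\mathcal V^i}(0,\Delta]=0\}$ the \emph{whole spike pattern} of $i$ equals $\tilde\omega(\omega)$ is exactly true only when $v^i=0$; for the one-sided estimates with $v^i\neq 0$ the rate changes after the first spike, and what remains true (and is all that is used) is that the \emph{event} $\{N^i(0,\Delta]>0\}$ coincides with $\{\tilde\omega_{\phi_i(v^i)}\neq\vuoto\}$; (ii) your identity $U^i(2\Delta)=w_{j\to i}N^j(\Delta,2\Delta]$ on $\tilde C^{j\to i}(\Delta)$ is an overclaim, since a further $i$-spike in $(\Delta,2\Delta]$ would reset $U^i$ and discard earlier $j$-contributions; the paper only asserts the inequality $U^i(2\Delta)\ge w_{j\to i}$ (resp.\ $\le$), which is what you should invoke (and note that even this inequality, as used in the paper, tacitly passes over the same multiple-$i$-spike scenario, so here your sketch matches the published argument rather than introducing a new gap).
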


\begin{proof} 
The statements of the theorem will be proved with the sharper bounds obtained by replacing $d$ with $d_i=|\mathcal{V}^i|$.

It is convenient to split the events $\tilde{A}^i(\Delta)$ and $\tilde{C}^{j\to i}(\Delta)$ in the following way: $\tilde{A}^i(\Delta)=A^i_- \cap A^i_+$ and $\tilde{C}^{j\to i}(\Delta)=C^{ji}_-\cap C^i_+$, where
$$
A^i_-=\{N^i(0, \Delta]>0, N^{\mathcal {V}^i}(0,\Delta]=0\}, A^i_+=\{ N^{\mathcal {V}^i}(\Delta, 2\Delta]=0\},
$$
$$
C^{ji}_-=\{N^i(0, \Delta]>0,
N^j(\Delta,2\Delta]>0, N^{\mathcal {V}^i}(0,\Delta]=N^{\mathcal {V}^i \setminus \{j\}}(\Delta,2\Delta]=0\},
$$
$$
 C^i_+=\{N^{\mathcal {V}^i}(2\Delta, 3\Delta]=0\}.
$$

 Since for any triple of events $E$, $F_+$, $F_-$, such that $E \subset F=F_+\cap F_-$
$$
\frac{ \mP\left(E\right)}{\mP \left(F\right)}
=\mP \left( E| F \right)
= \mP\left( E |  F_{-} \cap F_{+} \right)
=\frac {\mP \left (E \cap F_+| F_-\right)}{\mP \left(F_+|F_- \right)},
$$
we get
$$
\frac{ \mP_u \big( \tilde B^{i}(\Delta)\big)}{\mP_u \big(\tilde A^i(\Delta)   \big)}=\frac {\mP_u \big(\tilde B^i(\Delta)\cap A^i_+| A^i_-\big)} {\mP_u \big(A^i_+|A^i_- \big)},
$$
$$
\frac {\mP_u \big( \tilde D^{j\to i}(\Delta)\big)}{\mP_u \big( \tilde C^{j\to i}(\Delta)   \big)}
=\frac {\mP_u \big(\tilde D^{j\to i}(\Delta) \cap C^i_+| C^{ji}_-\big)}{\mP_u \big(C^i_+|C^{ji}_-\big)}.
$$
Next we proceed to bound the right hand side in the previous formulas.

By writing $\mP^{k\Delta}_{v}(\cdot)=\mP(\cdot|U(k\Delta )  = v)$ for $k=1,2$, then by the Markov property we have
\begin{equation}\label{MarkovAB}
\frac {\underset{v:v^i =0 }{\inf}
 \mP^{\Delta}_{v} \left  ( N^i (\Delta,  2\Delta ] >0, A^i_+ \right)}{\underset{v:v^i =0 }{\sup}
 \mP^{\Delta}_{v} \big( A^i_+ \big)}
\leq \frac {\mP_u \big(\tilde B^{i}(\Delta) \cap A^i_+| A^i_-\big)} {\mP_u \big(A^i_+|A^i_- \big)} \leq \frac{\underset{v:v^i =0 }{\sup}\mP^{\Delta}_{v} \left(  N^i (\Delta,  2\Delta ] >0, A_{ i }^{+} \right)}{\underset{v:v^i =0 }{\inf}\mP^{\Delta}_{v} \left  ( A_{ i }^{+}  \right)}
\end{equation}
since $A^i_-$ implies $U^i(\Delta)=0$. Likewise
\begin{align}\notag
\frac{\underset{v:v^i \in I_{ij} }{\inf} \mP^{2\Delta}_{v} \big( N^i (2\Delta,  3\Delta ] >0, C^i_+\big)}{\underset{v:v^i \in I_{ij} }{\sup} \mP^{2\Delta}_{v} \big( C^i_+\big)}
&\leq
\frac {\mP_u \big(\tilde D^{j\to i}(\Delta) \cap C^i_+| C^{ji}_-\big)} {\mP_u \big(C^i_+|C^{ji}_-\big)}
\\\label{MarkovCD}
&\leq
 \frac {\underset{v:v^i \in I_{ij} }{\sup}\mP^{2\Delta}_{v} \big( N^i (2\Delta,  3\Delta ] >0, C^i_+       \big)}{\underset{v:v^i \in I_{ij} }{\inf}\mP^{2\Delta}_{v} \big( C^i_+ \big)}
\end{align}
where $I_{ij}=\{0\}$ if $j \notin \mathcal V_{i}$ , $I_{ij}=[w_{j \to i},+\infty)$ if $j \in \mathcal V_{i}^+$  and $I_{ij}=(-\infty, w_{j \to i}]$ if $j \in \mathcal V_{i}^-$. Indeed $C^{ji}_-$ implies: $U^i(\Delta)=0$ in first case, $U^i(\Delta)\geq w_{j \to i}$ in the second case, and $U^i(\Delta)\leq w_{j \to i}$ in the third case.

 Since, when $j \notin \mathcal {V}^i$, conditionally to $U^i(\Delta)=0$
\begin{equation}\label{keyineq}
\{N^{i, \phi_i(0)} (\Delta,  2\Delta ] >0,\overline {N}^{\mathcal {V}^i}(\Delta,2\Delta]\}\subset \{N^{i} (\Delta,  2\Delta]>0, A^i_+\}
\end{equation}
we have, for any $v$ with $v_i=0$,
\begin{align} \notag
\mP^{\Delta}_{v} \big( N^i (\Delta,  2\Delta ] >0, A^i_{+}         \big)
&\geq\mP \big( N^{i,\leq \phi_i(0)} (\Delta,  2\Delta ] >0, \overline N^{ \mathcal {V}^i}(\Delta,2\Delta]=0 \big)
\\\label{num1}
&\geq  ( 1-\-e^{-\phi_i (0) \Delta}) \-e^{- d_i \beta \Delta}.
\end{align}
On the other hand $\mP^{\Delta}_{v} \left  ( A^i_+ \right    )\leq \-e^{-d_i\alpha \Delta}$, from which, by taking \eqref{MarkovAB} into account, the leftmost inequality in \eqref{PBA} is obtained, with $d_i$ in place of  $d$. 
The same argument can be used to prove that, when $j \notin \mathcal {V}^i$
$$
\mP^{2\Delta}_{v} \left  (N^i (2\Delta,  3\Delta ] >0, C^i_+       \right    )
\geq ( 1-\-e^{-\phi_i (0) \Delta})\-e^{-d_i\beta \Delta}
$$
and together with $\mP^{2\Delta}_{v}(C^i_+)\leq \-e^{-d_i\alpha \Delta}$, the leftmost inequality in \eqref{PDC} is established, with $d_i$ in place of  $d$.

On the other side, always conditionally to $U^i(\Delta)=0$, from the inclusions
\begin{align}\notag
&\{N^{i} (\Delta,  2\Delta]>0, A^i_+\} \subset \{N^{i,\phi_i(0)} (\Delta,  2\Delta ] >0, \underline {N}^{\mathcal {V}^i}(\Delta,2\Delta]=0 \},
\\
\label{otherkey}
& \{ \overline {N}^{\mathcal {V}^i}(\Delta,2\Delta]>0\} \subset A^i_+
\end{align}
and the rightmost inequality in \eqref{MarkovAB}, one obtains
$$
 \frac{\mP_u \big( \tilde B^i(\Delta) \big)}{\mP_u \big( \tilde A^i(\Delta) \big)}
\leq \frac{\mP (N^{i,  \phi_i(0)}(\Delta, 2\Delta)>0)\mP ( \underline {N}^{\mathcal {V}^i}(\Delta,2\Delta]=0)}{\mP (\overline {N}^{\mathcal {V}^i} (\Delta, 2\Delta]=0)}=\frac{(1-\-e^{- \phi_i (0) \Delta})  \-e^{- \alpha d_i \Delta }}{\-e^{- \beta d_i \Delta }}
$$
which is the rightmost inequality in \eqref{PBA}, with $d_i$ in place of  $d$. 

Again, the same argument can be used to prove that
\begin{align*}
\frac{\mP_u \big(\tilde D^{j \to i}(\Delta)\big)}{\mP_u\big(\tilde C^{j \to i}(\Delta)\big)}
&\leq \frac{\mP (N^{i, \phi_i(0)}(2\Delta, 3\Delta)>0) \mP( \underline {N}^{\mathcal {V}^i}(2\Delta, 3\Delta)=0)}{\mP (\overline {N}^{\mathcal {V}^i} (2\Delta, 3\Delta]=0)}
\\&=\frac{(1-\-e^{- \phi_i (0) \Delta})  \-e^{- \alpha d_i \Delta }}{\-e^{- \beta d_i \Delta }},
\end{align*}
which is the rightmost inequality in \eqref{PDC}.

For the proof of \eqref{PDC>} observe that by the Markov property ${C}^{ji}_{-}$ implies $U^i(2\Delta)\geq w_{j \to i}>0$, hence for any $v$ with $v_i \geq w_{j\to i}$
$$
\mP^{2\Delta}_{v} \left  (  N^i (2\Delta,  3\Delta ] >0, C^i_+ \right   )    \geq
(1-\-e^{- (\phi_i (0) + \delta )\Delta}) \-e^ {- \beta d_i \Delta } .
$$
Indeed, conditionally to $U(2\Delta)=v$,
$$
\phi_i(U^i(2\Delta))\geq \phi_i(w_{j\to i})\geq \phi_i(0)+\delta,
$$
and therefore
$$
\{N^i (2\Delta,  3\Delta ] >0, N^{\mathcal {V}^i} (2\Delta,  3\Delta ] =0\}
\supset
\{N^{i, \phi_i(0)+\delta}(2\Delta,  3\Delta ] >0, \overline {N}^{\mathcal {V}^i} (2\Delta,  3\Delta ] =0\}.
$$
With the upper bound
$$\mP^{2\Delta}_{v}(C^i_+)\leq \mP(\underline{N}^{\mathcal{V}^i}(2\Delta, 3 \Delta]=0)= e^{-\alpha d_i \Delta}
$$
the proof of \eqref{PDC>} is finished  with $d_i$ in place of $d$.

For proving \eqref{PDC<}, observe that by the Markov property this time ${C}_{ ij}^{-}$ implies $U^i(2\Delta)\leq w_{j \to i}<0$, hence by using the rightmost inequality of \eqref{MarkovCD} 
\begin{align*}
 \frac{\mP_u \big(\tilde D^{j \to i}(\Delta)\big)}{\mP_u\big(\tilde C^{j \to i}(\Delta)\big)}
 &\leq
\frac {\mP(N^{i,  \phi_i(0)-\delta}(2\Delta,  3\Delta ] >0)\mP(\underline {N}^{\mathcal {V}^i} (2\Delta,  3\Delta ] =0 )}{\mP(\overline {N}^{\mathcal {V}^i} (2\Delta,  3\Delta ] =0 )}
\\& =
\frac { (1- \-e^{- (\phi_i (0) - \delta )\Delta})\-e^{- \alpha d_i \Delta}}{ \-e^{- \beta d_i \Delta}}.
\end{align*}
Indeed, conditionally to  $U(2\Delta)=v$, with  $v^i\leq w_{j \to i}<0$
$$
\phi_i(U^i(2\Delta))\leq \phi_i(w_{j\to i})\leq \phi_i(0)-\delta,
$$
and therefore
$$
\{N^i (2\Delta,  3\Delta ] >0, C^i_+\} \subset
\{N^{i,  \phi_i(0)-\delta}(2\Delta,  3\Delta ] >0,\underline {N}^{\mathcal {V}^i} (2\Delta,  3\Delta ] =0 \},
$$
With the lower bound
$$\mP^{2\Delta}_{v}(C^i_+)\geq \mP(\overline{N}^{\mathcal{V}^i}(2\Delta, 3 \Delta]=0)= e^{-\beta d_i \Delta},
$$
the proof of \eqref{PDC<} is finished with $d_i$ in place of $d$.

\end{proof}

Collecting together the results of the previous two lemmas we arrive to the following

\begin{lemma}\label{probability}
Irrespectively of the vector $u$
of membrane potentials, for $0<\Delta<\Delta_0=\frac{s^2}{5d\beta}$,  it holds
$$
\left( 1- \frac {3d\beta\Delta }{s}\right)\,\phi_i(0)\Delta\leq
\frac {\mP_u ( B^i(\Delta))}{ \mP_u (A^i(\Delta))}\leq
\left (1+\frac {4d\beta \Delta}{s^2}\right)\, \phi_i(0)\Delta.
$$
\\
Furthermore, for $j \notin \mathcal {V}^i$, it holds
$$
\left( 1-\frac {5d\beta \Delta}{s^2} \right) \,\phi_i(0)\Delta \leq
\frac { \mP_u (D^{j\to i}(\Delta))}{ \mP_u (C^{j\to i}(\Delta))}\leq  \left (1+\frac {5d\beta \Delta}{s^3}\right)\,\phi_i(0)\Delta,
$$
whereas, for $j \in \mathcal {V}^i_+$, it holds
$$
\left( 1-\frac {5d\beta \Delta}{s^2} \right) \,(\phi_i(0)+\delta) \Delta \leq \frac {\mP_u ( D^{j\to i}(\Delta))}{ \mP_u (C^{j\to i}(\Delta))},
$$
and finally, for $j \in \mathcal {V}^i_-$, it holds
$$
\frac {\mP_u (D^{j\to i}(\Delta))}{ \mP_u (C^{j\to i}(\Delta))} \leq
\left (1+\frac {5d\beta \Delta}{s^3} \right)\, (\phi_i(0)-\delta) \Delta\,.
$$
\end{lemma}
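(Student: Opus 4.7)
The proof composes the two preceding lemmas and then simplifies using the smallness of $\Delta$. The plan splits naturally into three steps.

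First, I combine the multiplicative estimates. For each of the four bounds claimed, I chain the appropriate inequality of Lemma~\ref{sopraesotto} with that of Lemma~\ref{stimasugiu}. For example, to get the upper bound on $\mP_u(B^i)/\mP_u(A^i)$, the rightmost inequalities of \eqref{pBAtilde} and \eqref{PBA} yield
$$\frac{\mP_u(B^i(\Delta))}{\mP_u(A^i(\Delta))} \leq \left(1+\frac{e^{2d\beta\Delta}-1}{s^2}\right)(1-e^{-\phi_i(0)\Delta})\,e^{d(\beta-\alpha)\Delta},$$
and analogously for the lower bound and for the three sub-cases $j \notin \mathcal{V}^i$, $j \in \mathcal{V}^i_+$, $j \in \mathcal{V}^i_-$, invoking \eqref{PDC}, \eqref{PDC>}, or \eqref{PDC<} respectively.

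Second, I extract the linear main term. Using $1-e^{-y} \leq y$ (and the dual $1-e^{-y} \geq y(1-y/2)$ for the lower estimates) with $y \in \{\phi_i(0)\Delta,\,(\phi_i(0)\pm\delta)\Delta\}$, I factor out $\phi_i(0)\Delta$ (or $(\phi_i(0)\pm\delta)\Delta$), leaving a purely multiplicative correction that depends only on the dimensionless quantities $x := d\beta\Delta$ and $s$.

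Third, I bound this correction. The hypothesis $\Delta \leq \Delta_0 = s^2/(5d\beta)$ is exactly $x \leq s^2/5 \leq 1/5$, so every exponent appearing is at most $3x \leq 3/5$. Standard estimates like $e^y - 1 \leq y\,e^y$, $e^y \leq 1/(1-y)$ for $y < 1$, and $s^2 \leq 1$ then allow me to linearize each correction factor and absorb all quadratic residuals of the form $x^2/s^k$ into their linear counterparts $x/s^k$, via $x^2/s^k \leq (1/5)\cdot x/s^k$. The outcome is a bound of the form $1 \pm C\,d\beta\Delta/s^k$ with the precise integer constants $C \in \{3,4,5\}$ and $k \in \{1,2,3\}$ stated in the lemma.

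The only real obstacle is the numerical bookkeeping in this last step: the constant $5$ in the definition of $\Delta_0$ is chosen exactly so that, after all cross-terms are collected, the coefficients of $x/s^k$ remain as small integers. No new probabilistic ideas are required beyond what is already in Lemmas~\ref{sopraesotto} and~\ref{stimasugiu}.
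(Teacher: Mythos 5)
Your overall architecture—chain Lemma~\ref{sopraesotto} with Lemma~\ref{stimasugiu}, extract the linear main term via $1-e^{-y}\leq y$ and $1-e^{-y}\geq y(1-y/2)$, then do numerical bookkeeping under $d\beta\Delta\leq s^2/5$—is exactly what the paper does. Steps one and two match the paper essentially verbatim.

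However, step three as sketched is too weak to reach the stated constants, and this is precisely where the real work lies. Write $x:=d\beta\Delta$. Chaining gives (for the $B/A$ upper bound, after dropping the $1-e^{-\phi_i(0)\Delta}\leq\phi_i(0)\Delta$ factor)
$$
\Big(1+\tfrac{e^{2x}-1}{s^2}\Big)e^{x(1-s)}
=\frac{e^{x(3-s)}-(1-s^2)\,e^{x(1-s)}}{s^2},
$$
and one needs this to be at most $1+4x/s^2$. If you bound the two factors separately with $e^{2x}-1\leq 2x/(1-2x)\leq 10x/3$ and $e^{x(1-s)}\leq 1/(1-x(1-s))$, then after multiplying out, using $s^2\leq 1$ and absorbing the quadratic via $x^2/s^2\leq x/(5)\leq x/(5s^2)$, the resulting coefficient is roughly $\tfrac{5}{4}+\tfrac{10}{3}+\tfrac{5}{6}=\tfrac{65}{12}\approx 5.4$, which overshoots the target $4$. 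Two ingredients you omit are needed: (i) the sharper tangent-at-endpoint bound $e^y\leq 1+\tfrac{e^{\zeta}-1}{\zeta}\,y$ for $0<y\leq\zeta$ (with $\zeta\in\{2/5,3/5\}$, giving slope $\leq 3/2$ rather than the slope $\geq 5/3$ implicit in $1/(1-y)$), and (ii) rewriting the product as a single fraction to exploit the cancellation from the $(1-s^2)$ (resp.\ $(1-s^3)$) factor, which lets one also use a lower bound $e^{x(1-s)}\geq 1+x(1-s)$ on the subtracted term. That is how the paper reduces the bracket to $\tfrac{3}{2}(3-s)-(1-s)(1-s^2)\leq 4$ (and similarly $\leq 5$ for $D/C$). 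A parallel remark applies to the lower bounds, where the paper passes through the intermediate coefficients $\tfrac{9}{4s}$ and $\tfrac{85}{27s^2}$ and then uses $d\geq 1$ and $s(1-s)\leq\tfrac14$ to fold in the extra $(1-\beta\Delta/2)$ factor. So the plan is correct, but the third step needs these sharper estimates and the fraction-level cancellation to actually land on the integers $3,4,5$.
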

\begin{proof} As far as the lower bounds are concerned, observe that
\begin{align*}
\left (1-\frac{1-\-e^{-2d\beta\Delta}}{s}\right )\-e^{-d(\beta-\alpha)\Delta}& \geq \left (1-\frac {2d\beta}{s}\Delta \right )\left(1-d\beta (1-s)\Delta \right)
\\
&\geq 1-d\beta \left (\frac {2}{s}+1-s\right ) \Delta \geq 1-\frac {9d\beta}{4s}\Delta>0,
\intertext{
and
}
\left (1-\frac{1-\-e^{-3d\beta\Delta}}{s^{2}} \right)\-e^{-d(\beta-\alpha)\Delta} &\geq \left(1-\frac {3d\beta}{s^2}\Delta\right)(1-d(\beta-\alpha)\Delta)
\\
&\geq 1-d \beta\left (\frac{3}{s^2}+1-s \right )\Delta\geq1-\frac {85d\beta}{27s^2}\Delta>0,
\end{align*}
where these inequalities are guaranteed since $\Delta_0 < \frac {27 s^2}{85d\beta}$.

Next observe that
\begin{equation}\label{zaki}
1-\-e^{-x}\geq x\left( 1-x/2\right)>0,\,\, x\in (0,2),
\end{equation}
so that
\begin{align*}
1-\-e^{-\phi_i(0) \Delta}\geq \phi_i(0) \Delta\,\left( 1-\phi_i(0) \Delta/2\right)\geq \phi_i(0) \Delta\left( 1-\beta \Delta/2\right)>0
\end{align*}
which are guaranteed since $\Delta_0< \frac {2}{\beta}$. As a consequence, given that
$$
\frac{3d\beta}{s} \geq \frac {9d\beta}{4s}+\frac {\beta}{2}, \frac {5d\beta}{s^2} \geq \frac {85d\beta}{27s^2}+\frac {\beta}{2}
$$
we have
\begin{align*}
&\left (1-\frac{1-\-e^{-2d\beta\Delta}}{s}\right )
\-e^{-d(\beta-\alpha)\Delta}\left( 1-\-e^{-\phi_i(0)\Delta}\right)
\\&{}\qquad
\geq \left(1-\frac {9d\beta}{4s}\Delta\right) \left( 1-\frac {\beta \Delta}{2}\right) \phi_i(0) \Delta
\geq \left(1-\frac {3d\beta \Delta}{s}\right)  \phi_i(0) \Delta >0,
\intertext{ and, due to $\phi_i(0)+\delta\leq \beta$}
&\left (1-\frac{1-\-e^{-3d\beta\Delta}}{s^{2}} \right)\-e^{-d(\beta-\alpha)\Delta}\left( 1-e^{-(\phi_i(0)+\epsilon \delta)\Delta}\right)
\\&{}\qquad
\geq \left(1-\frac {85d\beta}{27s^2}\Delta\right)\left( 1-\frac {\beta}{2}\Delta \right)(\phi_i(0)+\epsilon \delta)\Delta
\geq\left(1-\frac {5d\beta \Delta}{s^2}\right)  \left(\phi_i(0)+\epsilon \delta\right) \Delta>0,
\end{align*}
where $\epsilon\in \{0,1\}$, for $\Delta < \Delta_0=\frac {s^2}{5d\beta}$.

Using the bound $1-\-e^{-x} \leq x$ one gets the following upper bounds
\begin{equation}\label{upper1}
\frac {\mP (B^{i}(\Delta))}{ \mP (A^{i}(\Delta))} \leq  \-e^{d\beta(1-s)\Delta}\big(1+\frac{\-e^{2d\beta \Delta}-1}{s^2}\big)\phi(0)\Delta =\frac{ \-e^{d\beta (3-s)\Delta }-(1-s^2)\-e^{d\beta (1-s) \Delta}}{s^2}\phi(0)\Delta
\end{equation}
and
\begin{equation}\label{upper2}
\frac {\mP (D^{j\to i}(\Delta))}{ \mP (C^{j\to i}(\Delta))} \leq \-e^{d\beta(1-s)\Delta}(1+\frac{\-e^{3d\beta \Delta}-1}{s^3})\phi(0)\Delta =\frac{ \-e^{d\beta (4-s)\Delta} -(1-s^3)\-e^{d\beta (1-s) \Delta}}{s^3}\phi(0)\Delta.
\end{equation}

The function $x \to \frac {\-e^x -1}{x}$ is increasing. Therefore, for $0<x\leq \zeta$, we have
$$
\-e^x \leq 1+\frac {\-e^{\zeta}-1}{\zeta}x.
$$

In the case for interest for us the two terms appearing in \eqref{upper1} and \eqref{upper2}  have exponential rates bounded uniformly in $0<s<1$ by
$$
d\beta (3-s) \Delta_0 =\frac {s^2(3-s)}{5}\leq \frac {2}{5}, \quad d\beta (4-s) \Delta_0 =\frac {s^2(4-s)}{5}\leq  \frac {3}{5},
$$
respectively. Then, by taking into account the fact that both $\frac {5}{2}(\-e^{2/5}-1)$ and $\frac {5}{3}(\-e^{3/5}-1)$ do not exceed $3/2$ the expression \eqref{upper1} is bounded from above by
$$
\frac{1+\frac{3}{2}d\beta(3-s)\Delta}{s^2}-\frac{(1-s^{2})(1+d\beta(1-s)\Delta)}{s^2}=1+\frac{d\beta \Delta}{s^2}\left \{\frac {3}{2}(3-s)-(1-s)(1-s^2)\right \}
$$
and the expression \eqref{upper2} is bounded from above by
$$
\frac{1+\frac{3}{2}d\beta (4-s)\Delta}{s^3}- \frac{(1-s^{3})(1+d\beta(1-s)\Delta)}{s^3}=1+\frac{d\beta \Delta}{s^3}\left \{\frac {3}{2}(4-s)-(1-s)(1-s^3)\right \},
$$
with the quantities in brackets bounded by $4$ and $5$, respectively, uniformly for $s\in (0,1)$.
\end{proof}

In order to finish the proof of the theorem we examine the consequences of the bounds established by the previous Lemma \ref{probability} on the quantity
\begin{equation}\label{difference}
\frac { \mP_u \big(D^{j\to i}(\Delta)\big)}{ \mP_u \big(C^{j\to i}(\Delta)\big)}-\frac {\mP_{u^\prime} \big( B^{i}(\Delta)\big)}{ \mP_{u^\prime} \big( A^{i}(\Delta)\big)}.
\end{equation}
In view of the fact that the two terms above will be estimated from data with error, we multiply all the upper bounds established in the previous lemma by $1+\frac {\tau}{10}$ and all the lower bounds by $1-\frac {\tau}{10}$.
Then,
recalling the definitions \eqref{csi1} and \eqref{csi2} of $\xi_1(\Delta)$ and $\xi_2(\Delta)$,
 respectively, it is obtained that, for $0<\Delta<\Delta_0=\frac {s^2}{5d\beta}$:
\\
\indent if $j \notin \mathcal {V}^i$, it holds
\begin{equation}\label{both}
- \xi_1(\Delta) < \frac { \mP_u ( D^{j\to i}(\Delta))}{ \mP_u( C^{j\to i}(\Delta))}-\frac {\mP_{u^\prime} (B^{i}(\Delta))}{ \mP_{u^\prime} (A^{i}(\Delta))}  <  \xi_2(\Delta);
\end{equation}
\indent if $j \in \mathcal {V}^i_-$, then
\begin{equation}\label{inhibitory}
 \frac { \mP_u ( D^{j\to i}(\Delta))}{ \mP_u ( C^{j\to i}(\Delta))}-\frac {\mP_{u^\prime} (B^{i}(\Delta))}{ \mP_{u^\prime} (A^{i}(\Delta))}\leq \xi_2(\Delta) -\tau \lambda_2(\Delta),
\end{equation}
with
\begin{equation*}
\lambda_2(\Delta)=\beta \Delta\,\left[(1+ \frac {5d\beta\Delta}{s^3})(1+\frac {\tau}{10})\right];
\end{equation*}
\indent if $j \in \mathcal {V}^i_+$, then
\begin{equation}\label{excitatory}
\tau \lambda_1(\Delta)- \xi_1(\Delta) \leq
 \frac { \mP_u ( D^{j\to i}(\Delta))}{ \mP_u ( C^{j\to i}(\Delta))}-\frac {\mP_{u^\prime} (B^{i}(\Delta))}{ \mP_{u^\prime} (A^{i}(\Delta))},
\end{equation}
with
\begin{equation*}
\lambda_1(\Delta)=\beta \Delta\,\left[(1-\frac {5d\beta \Delta}{s^2})(1-\frac {\tau}{10})\right].
\end{equation*}
\bigskip

The following lemma allows  to finish with the proof. 
\begin{lemma}\label{messo} If  $0<\Delta \leq \Delta^*=\frac {s^3\tau}{34 d\beta}$, then both the inequalities
\begin{equation*}
\xi_2(\Delta) -\tau \lambda_2(\Delta)\leq- \xi_1(\Delta)
\end{equation*}
and
\begin{equation*}
 \xi_2(\Delta)\leq \tau \lambda_1(\Delta) -\xi_1(\Delta)
\end{equation*}
hold. 
\end{lemma}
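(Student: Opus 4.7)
The plan is to observe that both inequalities take the equivalent form $\xi_1(\Delta)+\xi_2(\Delta)\le \tau\lambda_?(\Delta)$, with $?=2$ for the first inequality and $?=1$ for the second. Since $\lambda_1$ contains the factors $(1-\tfrac{5d\beta\Delta}{s^2})(1-\tfrac{\tau}{10})$ while $\lambda_2$ contains $(1+\tfrac{5d\beta\Delta}{s^3})(1+\tfrac{\tau}{10})$, and both are nonnegative on $(0,\Delta^*]$, we have $\lambda_1(\Delta)\le \lambda_2(\Delta)$. It therefore suffices to prove the stronger inequality
\[
\xi_1(\Delta)+\xi_2(\Delta)\le \tau\lambda_1(\Delta), \qquad 0<\Delta\le \Delta^*.
\]

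To attack this, I would set $f(\Delta):=\xi_1(\Delta)+\xi_2(\Delta)-\tau\lambda_1(\Delta)$ and read off from \eqref{csi1}, \eqref{csi2} and the definition of $\lambda_1$ that $f(\Delta)=a\Delta+b\Delta^2$ with
\[
a=\frac{\beta\tau(\tau-6)}{10}<0,\qquad b=\frac{d\beta^2}{s^3}\Bigl[s\bigl(9-\tfrac{\tau}{10}\bigr)+5+3s^2+\tfrac{\tau}{10}(5-3s^2)+5s\tau\bigl(1-\tfrac{\tau}{10}\bigr)\Bigr]>0.
\]
Thus $f$ is a convex parabola with $f(0)=0$ and $f'(0)<0$, so it is nonpositive throughout the interval $[0,-a/b]$. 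It then remains only to verify the algebraic condition $\Delta^*\le -a/b$.

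After substituting $\Delta^*=\frac{s^3\tau}{34d\beta}$ and clearing common factors, this reduces to the polynomial inequality
\[
50+90s+30s^2+5\tau+49s\tau-3s^2\tau-5s\tau^2\le 204-34\tau.
\]
I would bound the left side by dropping the two negative terms and using $\tau\le 1-s$ to estimate $49s\tau+39\tau\le (49s+39)(1-s)$; combining everything then yields the upper bound $89+100s-19s^2$, a concave quadratic which on $[0,1]$ attains its maximum at $s=1$ with value $170$, comfortably below $204$.

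The only real obstacle is the final arithmetic step: one must expand carefully and keep track of signs in the monotonicity argument for $f$. The slack $170<204$ is precisely what justifies the constant $34$ in the definition of $\Delta^*$; any appreciably smaller denominator there would enlarge $\Delta^*$ enough to spoil the bound.
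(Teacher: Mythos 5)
Your proof is correct and follows essentially the same route as the paper: reduce to the stronger inequality $\xi_1(\Delta)+\xi_2(\Delta)\le\tau\lambda_1(\Delta)$ using $\lambda_1\le\lambda_2$, write the difference as $a\Delta+b\Delta^2$ with $a<0<b$ so that the constraint becomes $\Delta\le-a/b$, and then verify $\Delta^*\le-a/b$ by maximizing the relevant polynomial in $(s,\tau)$ over the feasible region, arriving at the same extremal value $170$ (the paper bounds $6-\tau\ge5$ and the denominator polynomial by $170$ separately, whereas you move $34\tau$ to the left and bound the combination, but these are the same computation rearranged). One minor quibble unconnected to correctness: your concluding remark that ``any appreciably smaller denominator would spoil the bound'' overstates the tightness — your own slack $170<204$ shows the constant $34$ could be reduced somewhat — but this has no bearing on the validity of the argument.
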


\begin{proof} Since $\lambda_1(\Delta)< \lambda_2(\Delta)$ the former inequality is implied by the  latter one,
which is satisfied for
$$
0<\Delta \leq \frac {s^3\tau (6-\tau)}{d\beta\{(4s+5)(10+\tau)+s(10-\tau)(3s+5(1+\tau))\}}.
$$

In order to prove that the right hand side is bounded above by $ \Delta^*$, notice that the numerator is bounded from below by $5 s^3 \tau$, whereas 170 is the maximum value of the expression within brackets at the denominator for $s, \tau \in [0,1]$, with $s+\tau \leq 1$,  attained at $s=1, \tau=0$.
\end{proof}

\section{Proof of Theorem \ref{main}}\label{proof2}

In the proof of Theorem \ref{main},  we will use the following two inequalities.
\begin{prop}\label{bennett}
Let $X$ be a random variable with binomial distribution, with parameters $n$ and $p$, and let $0<\gamma<1$. Then
$$
\mathbb P(X\leq np(1-\gamma)) \leq \-e^{-np\gamma^2/2}
$$
$$
\mathbb P(X\geq np(1+\gamma)) \leq \-e^{-np\gamma^2/3}
$$
\end{prop}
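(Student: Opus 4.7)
The plan is to apply the classical Chernoff method separately to the two tails, reducing each one to a convex rate function in the exponent, and then to establish the quadratic minorants stated. No serious obstacle is expected: these are the standard Chernoff--Bennett tail bounds for a binomial, and the only step that requires a small calculation is choosing an appropriate quadratic lower bound for the rate function in the upper-tail case.

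For the upper tail, I would start from the binomial moment generating function $\mathbb{E}[e^{tX}] = (1-p+p\,e^t)^n$ together with the elementary estimate $1-p+p\,e^t \leq \exp(p(e^t-1))$. Markov's inequality then gives, for every $t > 0$,
\begin{equation*}
\mathbb{P}\bigl(X \geq np(1+\gamma)\bigr) \leq \exp\bigl(np(e^t-1) - t\,np(1+\gamma)\bigr).
\end{equation*}
Optimizing at $e^t = 1+\gamma$ (so $t = \log(1+\gamma) > 0$) yields $\mathbb{P}(X \geq np(1+\gamma)) \leq \exp(-np\, h(\gamma))$, where $h(\gamma) := (1+\gamma)\log(1+\gamma) - \gamma$. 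The announced bound then reduces to showing $h(\gamma) \geq \gamma^2/3$ on $(0,1)$, which follows from the Bennett estimate $h(\gamma) \geq \gamma^2/(2+2\gamma/3)$, since $2 + 2\gamma/3 \leq 3$ throughout $(0,1)$. Equivalently, one can write $h(\gamma) = \int_0^\gamma \log(1+u)\,du$ and use $\log(1+u) \geq 2u/(2+u)$.

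For the lower tail, the same argument applied with $s = -t > 0$ and optimized at $e^{-s} = 1-\gamma$ produces
\begin{equation*}
\mathbb{P}\bigl(X \leq np(1-\gamma)\bigr) \leq \exp\bigl(-np\, g(\gamma)\bigr),
\end{equation*}
with $g(\gamma) := (1-\gamma)\log(1-\gamma) + \gamma$. The bound $g(\gamma) \geq \gamma^2/2$ is then immediate from $g(0) = g'(0) = 0$ together with $g''(\gamma) = 1/(1-\gamma) \geq 1$ on $(0,1)$; integrating $g''$ twice delivers the inequality with the exact constant $1/2$.
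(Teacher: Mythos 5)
The paper does not prove this proposition itself but simply cites Hagerup and R\"ub's survey of Chernoff bounds. Your argument is the standard Chernoff method with the usual rate-function comparisons (the Bernstein-type bound $h(\gamma)\geq \gamma^2/(2+2\gamma/3)$ for the upper tail and the convexity bound $g''\geq 1$ for the lower tail), and it is correct, giving exactly the constants $1/2$ and $1/3$ claimed.
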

For the proof of Proposition \ref{bennett} we refer the reader to \cite{hagerup}.

\begin{corollary}\label{domination}
Let $Y_1,\ldots,Y_n$ be Bernoulli random variables with the property
\begin{equation}\label{caso-c}
\mathbb P(Y_{k+1}=1|Y_1,...,Y_k)\geq c,\,\,\, k=0,\ldots,n-1,
\end{equation}
for some constant $c>0$. Then for any $0<\gamma<1$
\begin{equation}\label{Tesicaso-c}
\mathbb P(Y_1+\ldots+Y_n\leq nc(1-\gamma)) \leq \-e^{-nc\gamma^2/2}.
\end{equation}

When $Y_1,\ldots,Y_n$ have the property
\begin{equation}\label{caso-C}
\mathbb P(Y_{k+1}=1|Y_1,...,Y_k)\leq C,\,\,\, k=0,\ldots,n-1,
\end{equation}
for some constant $C<1$, then for any $0<\gamma<1$
\begin{equation}\label{Tesicaso-C}
\mathbb P(Y_1+\ldots+Y_n\geq nC(1+\gamma)) \leq \-e^{-nC\gamma^2/3}.
\end{equation}
\end{corollary}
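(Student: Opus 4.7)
The plan is to reduce Corollary \ref{domination} to Proposition \ref{bennett} via a coupling. For case (1), I would construct on an enlarged probability space, together with a sequence with the same joint law as $(Y_1,\ldots,Y_n)$, a sequence $Z_1,\ldots,Z_n$ of i.i.d.\ Bernoulli$(c)$ random variables such that $Z_k\le Y_k$ almost surely. Given such a coupling, $\{Y_1+\cdots+Y_n\le nc(1-\gamma)\}\subset \{Z_1+\cdots+Z_n\le nc(1-\gamma)\}$, and since $Z_1+\cdots+Z_n$ has the Binomial$(n,c)$ distribution, the first inequality in Proposition \ref{bennett} gives \eqref{Tesicaso-c}. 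Case (2) is entirely symmetric: one builds $Z_1,\ldots,Z_n$ i.i.d.\ Bernoulli$(C)$ with $Z_k\ge Y_k$, whence $\{Y_1+\cdots+Y_n\ge nC(1+\gamma)\}\subset \{Z_1+\cdots+Z_n\ge nC(1+\gamma)\}$ and the second inequality in Proposition \ref{bennett} yields \eqref{Tesicaso-C}.

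The coupling is produced with i.i.d.\ uniforms on $[0,1]$. Let $U_1,\ldots,U_n$ be i.i.d.\ Uniform$[0,1]$ and set $p_{k-1}(y_1,\ldots,y_{k-1})=\mathbb P(Y_k=1\mid Y_1=y_1,\ldots,Y_{k-1}=y_{k-1})$. Define $\tilde Y_k=\mathbf 1_{\{U_k\le p_{k-1}(\tilde Y_1,\ldots,\tilde Y_{k-1})\}}$ recursively; a one-line induction shows $(\tilde Y_1,\ldots,\tilde Y_n)$ has the same joint law as $(Y_1,\ldots,Y_n)$, so it is enough to prove the tail bounds for the tilded sequence. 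For case (1) set $Z_k=\mathbf 1_{\{U_k\le c\}}$: each $Z_k$ is Bernoulli$(c)$, their independence comes from that of the $U_k$, and the hypothesis $p_{k-1}\ge c$ forces $Z_k\le \tilde Y_k$. For case (2) set $Z_k=\mathbf 1_{\{U_k\le C\}}$ and use $p_{k-1}\le C$ to get $Z_k\ge \tilde Y_k$.

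No substantial obstacle is expected; the only step deserving a line of justification is the identification in law of $(\tilde Y_1,\ldots,\tilde Y_n)$ with $(Y_1,\ldots,Y_n)$, which is a standard disintegration of a discrete distribution. An equally short alternative that bypasses the coupling is to verify the conditional exponential moment bound $\mathbb E[\-e^{\lambda Y_k}\mid Y_1,\ldots,Y_{k-1}]\le 1+c^\star(\-e^{\lambda}-1)$ (with $c^\star$ equal to $c$ or $C$ and the sign of $\lambda$ chosen accordingly), iterate to obtain $\mathbb E[\-e^{\lambda(Y_1+\cdots+Y_n)}]\le (1+c^\star(\-e^\lambda-1))^n$, and apply the same Chernoff optimization that underlies Proposition \ref{bennett}; I would nevertheless prefer the coupling proof for its transparency and because it makes explicit the stochastic dominance that is really at work.
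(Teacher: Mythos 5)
Your proposal is correct and is essentially the same proof as the paper's: both construct the sequence recursively from i.i.d.\ uniforms via $\tilde Y_k=\mathbf 1_{\{U_k\le p_{k-1}(\tilde Y_1,\ldots,\tilde Y_{k-1})\}}$, observe that this reproduces the joint law, sandwich with $\mathbf 1_{\{U_k\le c\}}$ or $\mathbf 1_{\{U_k\le C\}}$, and invoke Proposition~\ref{bennett}. The alternative you mention (iterating a conditional exponential-moment bound) is also standard, but the paper uses the coupling route exactly as you propose.
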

\begin{proof}
 Let us denote
$$
p_{k+1}(y_1,...,y_k)=P(Y_{k+1}=1|Y_1=y_1,...,Y_k=y_k),\quad k=0,...,n-1,
$$
where $y_i\in \{0,1\}$, $i=1,...,k$.\\

With $U_1$,...,$U_n$  independent and uniformly distributed in $(0,1)$, the random variables $Y^\prime_k$, $k=1,...,n$, are  constructed recursively as
$$
\begin{cases}
Y^\prime_{k+1}= \mathbf{1}_{[0,p_{k+1}(Y^\prime_1,...,Y^\prime_{k})]}(U_{k+1}), & \quad k=1,...,n-1, \vspace{2mm}
\\
Y^\prime_1=\mathbf{1}_{[0,p_{1}]}(U_{1}),&
\end{cases}
$$
so that $(Y^\prime_1,...,Y^\prime_n)$ and $(Y_1,...,Y_n)$ share the same distribution.
Moreover when \eqref{caso-c} holds then
$Y^\prime_k\geq \mathbf{1}_{[0,c]}(U_{k})$,  while when \eqref{caso-C} holds then
$Y^\prime_k\leq \mathbf{1}_{[0,C]}(U_{k})$, for $k=1,...,n$. The proof is completed by observing the random variables $\mathbf{1}_{[0,v]}(U_{k})$, $k=1,...,n$ are Bernoulli i.i.d.\@, for any fixed $v\in(0,1)$,  and the application of Proposition \ref{bennett}: in the former case the law of $Y_1+\ldots+Y_n$ stochastically dominates the binomial distribution with parameters $n$ and $c$, and in the latter is stochastically dominated by the binomial distribution with parameters $n$ and $C$.
\end{proof}

We can now prove Theorem \ref{main} through a series of lemmas. For the first one  we recall the definitions \eqref{parameters}
$$
t_n=\left\lceil \alpha \Delta^{*}n \right\rceil ,\, \, m_n=\left\lceil 
\frac {19}{20}n  \, \alpha^2\Delta^{*2}(1-\frac {\tau}{10} \sqrt {\alpha \Delta^{*}})\right\rceil.
$$

\begin{lemma}\label{howmany}
For $i,j \in I$, and any integer $n \ge 1$
the following inequalities hold 
\begin{equation}\label{firstlemma0}
\mP \left (S^{A_i}(t_n) < m_n\right )\leq
\rho(n),
\end{equation}
\begin{equation}\label{firstlemma1}
\mP \left (S^{C^{j \to i}}(n) < m_n  \right ) \leq
\rho(n),
\end{equation}
where
\begin{equation}\label{rho}
\rho(n)=  \-e^{-\frac{19}{4 \times 10^3} \alpha^3 \Delta^{*3} \tau^2n}\, .
\end{equation}
\end{lemma}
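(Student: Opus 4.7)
My plan is to apply Corollary \ref{domination} (lower-tail version) twice, once to the Bernoulli sequence $\mathbf 1_{A^i_k}$ summed up to $t_n$, and once to $\mathbf 1_{C^{j\to i}_k}$ summed up to $n$, with a common deviation parameter $\gamma=\tfrac{\tau}{10}\sqrt{\alpha\Delta^*}$. The key input is a uniform conditional lower bound on the probability of each indicator. Because the slots $((2k-2)\Delta^*, (2k-1)\Delta^*]$ are pairwise disjoint and $\phi_i\ge \alpha$, the Markov property yields
$$\mP(A^i_k\mid \F_{(2k-2)\Delta^*})\ge 1-\-e^{-\alpha\Delta^*}=:c_1.$$
For the second sum, the slots $((3k-3)\Delta^*,(3k-1)\Delta^*]$ are disjoint across $k$, and conditioning on $\F_{(3k-2)\Delta^*}$ inside the expectation together with $\phi_i,\phi_j\ge\alpha$ gives
$$\mP(C^{j\to i}_k\mid \F_{(3k-3)\Delta^*})\ge (1-\-e^{-\alpha\Delta^*})^2=:c_2.$$
Using the elementary inequality \eqref{zaki}, namely $1-\-e^{-x}\ge x(1-x/2)$, these become $c_1\ge\alpha\Delta^*(1-\alpha\Delta^*/2)$ and $c_2\ge (\alpha\Delta^*)^2(1-\alpha\Delta^*/2)^2$; note that $\alpha\Delta^*\le 1/34$ follows from $\Delta^*=s^3\tau/(34d\beta)$ and $s,\tau\in(0,1]$, so both lower bounds are nontrivial and $\gamma\in(0,1)$.

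Apply now Corollary \ref{domination} with the above $c_1,c_2,\gamma$. I need to check (i) $m_n\le t_n c_1(1-\gamma)+1$, and (ii) $t_nc_1\gamma^2/2\ge \tfrac{19}{4\times 10^3}\alpha^3(\Delta^*)^3\tau^2 n$ (with $t_n$ replaced by $n$ and $c_1$ by $c_2$ in the second case). Since $t_n\ge \alpha\Delta^* n$, substitution gives
$$t_nc_1(1-\gamma)\ge n(\alpha\Delta^*)^2(1-\alpha\Delta^*/2)\bigl(1-\tfrac{\tau}{10}\sqrt{\alpha\Delta^*}\bigr),$$
while $m_n-1\le \tfrac{19}{20}n(\alpha\Delta^*)^2(1-\tfrac{\tau}{10}\sqrt{\alpha\Delta^*})$, so (i) reduces to $1-\alpha\Delta^*/2\ge 19/20$, i.e.\ $\alpha\Delta^*\le 1/10$. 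For (ii),
$$t_nc_1\gamma^2/2\ge n(\alpha\Delta^*)^2(1-\alpha\Delta^*/2)\cdot \frac{\tau^2\alpha\Delta^*}{200}=\frac{n\alpha^3(\Delta^*)^3\tau^2}{200}(1-\alpha\Delta^*/2),$$
and again the required inequality is equivalent to $1-\alpha\Delta^*/2\ge 19/20$. The analogous computation for $c_2$ and $n$ in place of $c_1$ and $t_n$ produces the same right-hand sides with $(1-\alpha\Delta^*/2)$ replaced by $(1-\alpha\Delta^*/2)^2$, and the slightly stronger requirement $(1-\alpha\Delta^*/2)^2\ge 19/20$, i.e.\ $\alpha\Delta^*\le 2(1-\sqrt{19/20})\approx 0.051$. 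All these bounds are comfortably implied by $\alpha\Delta^*\le 1/34$.

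The conclusion is that both $\mP(S^{A^i}(t_n)<m_n)$ and $\mP(S^{C^{j\to i}}(n)<m_n)$ are bounded by $\exp(-\tfrac{19}{4\times 10^3}\alpha^3(\Delta^*)^3\tau^2 n)=\rho(n)$. There is no real obstacle in this argument; the only delicate point is the coordinated choice of $\gamma$: it must be small enough that $t_nc_1(1-\gamma)$ still dominates $m_n$ (hence $\gamma=O(\sqrt{\alpha\Delta^*})$), but large enough that $\gamma^2$ is of order $\alpha\Delta^*$, which is precisely what makes the exponent $\alpha^3(\Delta^*)^3\tau^2$ appear. Everything else is bookkeeping around the ceilings defining $t_n$ and $m_n$, which is absorbed into the constants by the inequality $\alpha\Delta^*\le 1/34$.
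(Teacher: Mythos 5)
Your argument is correct and gets to the same bound, but it takes a slightly different route from the paper. The paper first replaces each counting measure $N^i$ by the minorizing Poisson measure $\underline N^i$ of rate $\alpha$; because $\underline N^i$ has independent increments on disjoint slots, the minorized sums $\underline S^{A_i}(t_n)$ and $\underline S^{C^{j\to i}}(n)$ are \emph{genuine binomials} with success probability bounded below by $\tfrac{19}{20}\alpha\Delta^*$ and $\tfrac{19}{20}(\alpha\Delta^*)^2$ respectively (using the inequality $1-\mathrm{e}^{-\alpha\Delta^*}\ge \tfrac{67}{68}\alpha\Delta^*$ from $\alpha\Delta^*\le 1/34$), and Proposition~\ref{bennett} applies directly with $\gamma=\tfrac{\tau}{10}\sqrt{\alpha\Delta^*}$. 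You instead keep the original dependent indicators and invoke Corollary~\ref{domination}, deriving the conditional lower bounds $c_1=1-\mathrm{e}^{-\alpha\Delta^*}$ and $c_2=(1-\mathrm{e}^{-\alpha\Delta^*})^2$ via the same Poisson minorization combined with the tower property over the filtration. This is equally valid --- the conditional bound holds given the full past $\F_{2k\Delta^*}$ (resp.\ $\F_{3k\Delta^*}$), hence a fortiori given $Y_1,\dots,Y_k$ --- and your bookkeeping with the ceilings, the choice of $\gamma$, and the reduction of all requirements to $\alpha\Delta^*\le 1/10$ (resp.\ $(1-\alpha\Delta^*/2)^2\ge 19/20$) is accurate. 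The paper's route is marginally more economical because, once the Poisson minorization is invoked, independence is immediate and no stochastic-domination corollary is needed; your route has the virtue of uniformity, since Corollary~\ref{domination} is in any case indispensable for the subsequent Lemma~\ref{prob-cond} where genuine dependence cannot be dominated away. Either way the final constant $\tfrac{19}{4\times 10^3}$ comes out the same.
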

\begin{proof}
Since for any interval $I$, $N^i(I)\geq \underline N^{i}(I)$, we have the following lower bounds
$$
S^{A_i}(t_n)\geq \sum_{k=1}^{t_n} \mathbf{1}_{\{ \underline N^{i}((2k-2)\Delta, (2k-1)\Delta]>0\}}=:{\underline S}^{A_i}(t_n),
$$
$$
S^{ C^{j\to i}}(n)\geq \sum_{k=1}^{\ell} \mathbf{1}_{ \{\underline N^{i}((3k-3)\Delta, (3k-2)\Delta]>0\}}\mathbf{1}_{\{ \underline N^{j}((3k-2)\Delta, (3k-1)\Delta]>0\}}=:{\underline S}^{ C^{j\to i}}(n),
$$
where the two variables at the r.h.s. are binomial with $t_n$ trials and $n$ trials, respectively, and probability of success bounded from below by 
\begin{equation}\label{prima}
1-\-e^{-\alpha \Delta^*}
\ge \left (1-\frac {s^4\tau}{68d} \right )\alpha \Delta^* 
\ge \frac {67}{68} \alpha \Delta^* 
>\frac {19}{20} \alpha \Delta^*
\end{equation} 
and
\begin{equation}\label{seconda}
(1-\-e^{-\alpha \Delta^*})^2 
\ge \left  (\frac {67}{68} \right )^2 \alpha^2 \Delta^{*2} > \frac {19}{20} \alpha^2 \Delta^{*2},
\end{equation}
respectively, by using \eqref{zaki}. The bounds \eqref{firstlemma0} and \eqref{firstlemma1} are then obtained by applying Proposition \ref{bennett}.  Indeed 
$$
\mP \left (S^{A_i}(t_n) < m_n\right )\leq \mP \left (S^{A_i}(t_n) \leq  \frac {19}{20}\alpha^2 \Delta^{*2} (1-\frac {\tau}{10} \sqrt {\alpha \Delta^{*}})  \cdot  n \right ) 
$$
$$
\leq \mP \left (S^{A_i}(t_n) \leq  \frac {19}{20}\alpha\Delta^* (1-\frac {\tau}{10} \sqrt {\alpha \Delta^{*}})  \cdot t_n    \right )  
\leq \mP \left ({\underline S}^{A_i}(t_n) \leq  \frac {19}{20}\alpha\Delta^* (1-\frac {\tau}{10} \sqrt {\alpha \Delta^{*}}
) \cdot t_n \right ) 
$$
$$
 \leq \-e^{-\frac {19}{4 \times 10^3}\alpha^2 \Delta^{*2}\tau^2 t_n}\leq \-e^{-\frac {19}{4 \times 10^3}\alpha^3 \Delta^{*3}\tau^2 n},
$$
and
$$
\mP \left (S^{ C^{j\to i}}(n)<m_n \right) \leq \mP \left (S^{ C^{j\to i}}(n) \leq  \frac {19}{20}\alpha^2\Delta^{*2} (1-\frac {\tau}{10} \sqrt {\alpha \Delta^{*}}) \cdot n \right )
$$
$$
\leq \mP \left ({\underline S}^{ C^{j\to i}}(n) \leq  \frac {19}{20}\alpha^2\Delta^{*2} (1-\frac {\tau}{10} \sqrt {\alpha \Delta^{*}})     \cdot n \right )
 \leq \-e^{-\frac {19}{4 \times 10^3}\alpha^3 \Delta^{*3}\tau^2 n}.
$$
\end{proof}

\begin{lemma}\label{prob-cond} For any positive integer $n$ define
\begin{equation}\label{sigma-ell-gamma}
\sigma(n)=\-e^{-\frac{19^2}{116 \times 10^3} \alpha^3 \Delta^{*3}\tau^2 n} .
\end{equation}
 Then
\begin{equation}\label{secondlemma1}
\mP \left (S^{B^i}(K^i_{m_n})\leq m_n\phi_i(0)\Delta^*
\left (1-\frac {3d\beta \Delta^*}{s} \right ) \left (1-\frac {\tau}{10} \right ) \right) \leq \sigma(n),
\end{equation}
\begin{equation}\label{secondlemma2}
\mP \left (S^{B^i}(K^i_{m_n})\geq m_n\phi_i(0)\Delta^* 
\left (1+\frac {4d\beta \Delta^*}{s^2} \right ) \left (1+ \frac {\tau}{10} \right ) \right) \leq \sigma(n);
\end{equation}
\\
moreover, if $j \notin \mathcal {V}^i$
\begin{equation}\label{thirdlemma1}
\mP \left (S^{D^{j\to i}}(H^{j\to i}_{m_n})\leq m_n\phi_i(0)\Delta^* \left (1-\frac {5d \beta \Delta^*}{s^2} \right ) \left (1- \frac {\tau}{10}  \right ) \right)
\leq 
\sigma(n),
\end{equation}
\begin{equation}\label{thirdlemma2}
\mP \left (S^{D^{j\to i}}(H^{j\to i}_{m_n})\geq m_n \phi_i(0)\Delta^* \left  (1+\frac {5d \beta \Delta^*}{s^3} \right ) \left (1+ \frac {\tau}{10} \right )\right)\leq \sigma(n);
\end{equation}
whereas if $j \in \mathcal {V}^-_i$
\begin{equation}\label{fourthlemma}
\mP \left (S^{D^{j\to i}}(H^{j\to i}_{m_n})\geq m_n(\phi_i(0)-\delta)\Delta^* \left (1+\frac {5d \beta \Delta^*}{s^3} \right)\left (1+\frac {\tau}{10} \right )\right) \leq
\sigma(n); 
\end{equation}
and if $j \in \mathcal {V}^+_i$
\begin{equation}\label{fifthlemma}
\mP \left (S^{D^{j\to i}}(H^{j\to i}_{m_n})\leq m_n(\phi_i(0)+\delta)\Delta^* \left (1-\frac {5d \beta \Delta^*}{s^2} \right )            \left (1-\frac {\tau}{10}    \right )\right)\leq \sigma(n).
\end{equation}
\end{lemma}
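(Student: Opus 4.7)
The strategy is to reduce each of the six inequalities to an application of Corollary~\ref{domination}, using Lemma~\ref{probability} to supply the required uniform upper or lower bound on the conditional success probability of a Bernoulli sequence of length $m_n$. Let $\kappa_1 < \kappa_2 < \cdots$ be the successive indices $k$ at which $A^i_k$ occurs (for \eqref{secondlemma1}--\eqref{secondlemma2}) or at which $C^{j\to i}_k$ occurs (for \eqref{thirdlemma1}--\eqref{fifthlemma}). Since the per-slot probability of $A^i_k$ (resp.\ $C^{j\to i}_k$) is bounded below by the uniform positive constant supplied by the minorant $\underline N^i$ used in the proof of Lemma~\ref{howmany}, the $\kappa_r$ are a.s.\ finite, and by construction $K^i_{m_n} = \kappa_{m_n}$ and $H^{j\to i}_{m_n} = \kappa_{m_n}$ in the respective cases, so that
$$
S^{B^i}(K^i_{m_n}) \;=\; \sum_{r=1}^{m_n} Y_r, \qquad S^{D^{j\to i}}(H^{j\to i}_{m_n}) \;=\; \sum_{r=1}^{m_n} Z_r,
$$
with $Y_r := \mathbf{1}_{B^i_{\kappa_r}}$ and $Z_r := \mathbf{1}_{D^{j\to i}_{\kappa_r}}$.

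The central step is to verify the hypothesis of Corollary~\ref{domination} for these sequences. For any deterministic $k$, the Markov property of the coupled process $(U,N)$ applied at the deterministic time $2(k-1)\Delta^*$ yields, on the event $\{\kappa_r = k\}$,
$$
\mP\bigl(Y_r = 1 \,\big|\, \mathcal{F}_{2(k-1)\Delta^*},\, \kappa_r = k\bigr) \;=\; \frac{\mP_{U_k}\bigl(B^i(\Delta^*)\bigr)}{\mP_{U_k}\bigl(A^i(\Delta^*)\bigr)}, \qquad U_k := U(2(k-1)\Delta^*),
$$
since the number of prior $A^i$-successes is $\mathcal{F}_{2(k-1)\Delta^*}$-measurable and $\{Y_r = 1\} = \{B^i_k = 1\} \subset \{A^i_k = 1\}$ on this event. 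By Lemma~\ref{probability}, this ratio lies uniformly in $U_k$ in $[c, C]$ with $c := (1 - 3d\beta\Delta^*/s)\phi_i(0)\Delta^*$ and $C := (1 + 4d\beta\Delta^*/s^2)\phi_i(0)\Delta^*$; averaging over $\mathcal{F}_{2(k-1)\Delta^*}$ and over $k$ preserves these brackets, so that $\mP(Y_r = 1 \mid Y_1, \ldots, Y_{r-1}) \in [c,C]$, which is precisely \eqref{caso-c}--\eqref{caso-C}. The same argument applied to $(Z_r)_r$ yields uniform bounds drawn from the second, third, or fourth part of Lemma~\ref{probability} according as $j \notin \mathcal{V}^i$, $j \in \mathcal{V}^i_-$, or $j \in \mathcal{V}^i_+$.

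Choosing $\gamma := \tau/10$ in Corollary~\ref{domination} converts each of the six assertions into an exponential bound of the form $\-e^{-m_n c \gamma^2/2}$ or $\-e^{-m_n C \gamma^2/3}$. A short calculation, using $\phi_i(0) \ge \alpha$, the lower bound on $m_n$ from \eqref{parameters}, and the elementary estimate $1 - 3d\beta\Delta^*/s \ge 31/34$ (which follows from $d\beta\Delta^* = s^3\tau/34$ and $s,\tau \in (0,1]$), shows the resulting exponent dominates $\frac{19^2}{116\times 10^3}\,\alpha^3 \Delta^{*3}\tau^2 n$, matching $\sigma(n)$ from \eqref{sigma-ell-gamma}. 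The main obstacle is the measurability step in the previous paragraph: one must carefully justify that, uniformly in the random potential $U_k$ produced by the conditioning, the ratio in Lemma~\ref{probability} is the correct conditional probability, rather than having to track some nontrivial conditional law of $U_k$ given the past. Once this is in place, everything else reduces to the numerical bookkeeping that produces the constant in $\sigma(n)$.
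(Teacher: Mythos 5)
Your proposal follows the same route as the paper: reduce $S^{B^i}(K^i_{m_n})$ and $S^{D^{j\to i}}(H^{j\to i}_{m_n})$ to sums of $m_n$ indicators along the successive $A^i$- (resp.\ $C^{j\to i}$-) success indices, verify the uniform bounds \eqref{caso-c}--\eqref{caso-C} on the conditional success probabilities via the Markov property and Lemma~\ref{probability}, apply Corollary~\ref{domination} with $\gamma=\tau/10$, and then bound the exponent using $\phi_i(0)\ge\alpha$, $1-3d\beta\Delta^*/s\ge 31/34$ and the definition of $m_n$. These are exactly the paper's steps, and the numerical targets you name are the ones the paper computes.

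The one place where your writing is looser than the paper's is the very measurability point you flag. You write $\mP\bigl(Y_r=1 \mid \mathcal{F}_{2(k-1)\Delta^*},\,\kappa_r=k\bigr)=\mP_{U_k}(B^i(\Delta^*))/\mP_{U_k}(A^i(\Delta^*))$, but $\{\kappa_r=k\}$ is \emph{not} $\mathcal{F}_{2(k-1)\Delta^*}$-measurable (it contains $A^i_k$, which looks one slot into the future), so this mixed conditioning needs to be unpacked. The paper does precisely this: it expands $\mP(Y'_{h+1}=1\mid Y'_1,\dots,Y'_h)$ over the values $\ell$ of $K^i_{h+1}$, uses the factorization $\{K^i_{h+1}=\ell\}=F\cap A^i_\ell$ with $F$ an $\mathcal{F}_{2(\ell-1)\Delta^*}$-measurable event, and obtains the conditional success probability as the \emph{ratio of expectations}
$\mathbb{E}[\mathbf{1}_F\,\mP_{U(2(\ell-1)\Delta^*)}(B^i_1)]\,/\,\mathbb{E}[\mathbf{1}_F\,\mP_{U(2(\ell-1)\Delta^*)}(A^i_1)]$,
which then inherits the uniform-in-$u$ brackets from Lemma~\ref{probability}. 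Your informal equality becomes correct once read this way (conditioning on $\mathcal{F}_{2(k-1)\Delta^*}\vee\sigma(A^i_k)$ on the event $A^i_k$, then averaging), and the uniformity in $u$ is exactly what makes the ``averaging over $\mathcal{F}$ and over $k$'' step you invoke legitimate. So there is no gap in substance, only in precision: you have identified the delicate step and sketched the right resolution, and the paper spells it out. One small omission: you do not remark that the two exponential rates differ (factor $1/2$ vs.\ $1/3$ from Proposition~\ref{bennett}), and that $\sigma(n)$ is determined by the weaker, $1/3$-rate bound; the lower-tail bound actually gives $\sigma(n)^{31/34\cdot 3/2}<\sigma(n)$.
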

\begin{proof}
The estimates are obtained by means of Corollary~\ref{domination} with the choices
$$
Y_h'=\mathbf{1}_{B^i_{K^{i}_{h}}},\, h=1,2,\ldots,m_n, \qquad Y_k''=\mathbf{1}_{{D}^{j\to i}_{H^{j\to i}_{k}}},\,\, k=1,2,\ldots,m_n,
$$
respectively. Indeed, observe that
$$
S^{B^i}(K^i_{m_n})=\sum_{h=1}^{m_n}Y_h', \quad \text{and}
\quad
S^{D^{j\to i}}(H^{j\to i}_{m_n})=\sum_{k=1}^{m_n}Y_k''.
$$

For any non negative integer $h$ and for any value of $j_1,...,j_h\in \{0,1\}$

\begin{align*}
 &\mP(Y'_{h+1}=1|Y'_1=j_1,...,Y'_h=j_h)
\\
=&\sum_{\ell=h+1}^{\infty} \mP (K^i_{h+1}=\ell|Y'_1=j_1,...,Y'_h=j_h)
\mP(B^i_{\ell}|Y'_1=j_1,...,Y'_h=j_h,K^i_{h+1}=\ell).
\end{align*}
Notice that
$$
\{K^i_{h+1}=\ell\}=\{ K^i_h\leq \ell-1, K^i_{h+1}>\ell-1, A^i_{\ell}\}
$$
and that the event
$$
F=\{Y'_1=j_1,...,Y'_h=j_h, K^i_h\leq \ell-1, K^i_{h+1}>\ell-1\}
$$
is  $\calF_{2(\ell-1)\Delta^*}$-measurable, so that we get
\begin{align*}
&\mP(B^i_{\ell}|Y'_1=j_1,...,Y'_h=j_h,K^i_{h+1}=\ell)=\mP( B^i_{\ell}|F\cap A^i_{\ell})
=\frac{\mathbb{E}\big[\mP(B^i_{\ell}|\calF_{2(\ell-1)\Delta^*}) \mathbf{1}_F\big]}{\mathbb{E}\big[\mP(A^i_{\ell}|\calF_{2(\ell-1)\Delta^*}) \mathbf{1}_F\big]}.
\end{align*}

Using again the notation $\mP^{k\Delta^*}_{v}(\cdot)=\mP(\cdot|U(k\Delta^* )  = v)$,  for $k\geq 1$, we have
\begin{align*}
\mP(B^i_{\ell}|\calF_{2(\ell-1)\Delta^*})=\mP^{2(\ell-1)\Delta^*}_{U(2(\ell-1)\Delta^*)}(B^i_{\ell})=\mP_{U(2(\ell-1)\Delta^*)}(B^i_1), \end{align*}
\begin{align*}
\mP(A^i_{\ell}|\calF_{2(\ell-1)\Delta^*})=\mP^{2(\ell-1)\Delta^*}_{U(2(\ell-1)\Delta^*)}(A^i_{\ell})=\mP_{U(2(\ell-1)\Delta^*)}(A^i_1).
\end{align*}

As a consequence the bounds in Lemma \ref{probability} can be applied, from which, for $0\leq h\leq m_n-1$,
\begin{equation}\label{inserito1}
\left( 1- \frac {3d\beta\Delta^* }{s}\right)\,\phi_i(0)\Delta^*\leq
\mP(Y'_{h+1}=1|Y'_1=j_1,...,Y'_h=j_h)\leq
\left (1+\frac {4d\beta \Delta^*}{s^2}\right)\, \phi_i(0)\Delta^*,
\end{equation}
and when $j \notin \mathcal {V}^i$
\begin{equation}\label{inserito2}
\left( 1- \frac {5d\beta\Delta^* }{s}\right)\,\phi_i(0)\Delta^*\leq
\mP(Y''_{h+1}=1|Y''_1=j_1,...,Y''_h=j_h)\leq
\left (1+\frac {5d\beta \Delta^*}{s^2}\right)\, \phi_i(0)\Delta^*,
\end{equation}
whereas when $j \in \mathcal {V}^i_+$
\begin{equation}\label{inserito3}
\left( 1- \frac {5d\beta\Delta^* }{s}\right)\,(\phi_i(0)+\delta)\Delta^*\leq
\mP(Y''_{h+1}=1|Y''_1=j_1,...,Y''_h=j_h)
\end{equation}
and finally, when $j \in \mathcal {V}^i_-$
\begin{equation}\label{inserito4}
\mP(Y''_{h+1}=1|Y''_1=j_1,...,Y''_h=j_h)\leq
\left (1+\frac {5d\beta \Delta^*}{s^2}\right)\, (\phi_i(0)-\delta)\Delta^*.
\end{equation}
Now one applies Corollary \ref{domination} to all these bounds, with $m_n$ in place of $n$, and $\gamma =\frac {\tau}{10}$.

Beginning with the leftmost inequality in \eqref{inserito1}, with $c=( 1- 3d\beta\Delta^* /s)\phi_i(0)\Delta^*$ in \eqref{Tesicaso-c}, we obtain
\begin{align}\notag
&\mP \left (S^{B^i}(K^i_{m_n})\leq m_n\phi_i(0)\Delta^*
\left (1-\frac {3d\beta \Delta^*}{s} \right ) \left (1-\frac {\tau}{10} \right ) \right) \leq \-e^{-\frac{1}{2}m_n \phi_i(0)\Delta^* (1-\frac {3d\beta \Delta^*}{s})(\frac{\tau}{10})^2}
\\
\leq &{} \-e^{- \frac{1}{2}m_n \alpha\Delta^* (1-\frac {3d\beta \Delta^*}{s})(\frac{\tau}{10})^2}
\leq \-e^{- n \alpha^3 \Delta^{*3}(1-\frac {\tau}{10} \sqrt {\alpha \Delta^{*}}) (\frac{\tau}{10})^2\frac{19 \times 31}{40 \times 34}}
\leq \-e^{- n \alpha^3 \Delta^{*3}{\tau}^2\frac{3 \times 19^2 \times 31}{4 \times 34 \times 58 \times 10^3}}=\sigma(n)^{\frac {31}{34}\times \frac {3}{2}}.\label{first-bound}
\end{align}
In the first inequality at the last line, after replacing $m_n$ with the argument of the integer part, we have taken into account that
$$
\frac{d\beta\Delta^*}{s}=
 \frac{s^{2} \tau}{34}\leq  \frac{1}{34} \,\,\,\, \Rightarrow \,\,\,\, 1-\frac {3d\beta \Delta^*}{s} \ge\frac {31}{34},
$$
and in the second inequality that
$$
\alpha \Delta^{*} =\frac {s^4\tau}{34d} \leq \frac{1}{34} \,\,\,\,\, \Rightarrow \,\,\,\, 1-\frac {\tau}{10} \sqrt {\alpha \Delta^{*}}\geq 1- \frac {1}{10\sqrt{34}}>\frac {57}{58}=\frac {3 \times 19}{58}.
 $$
For the rightmost inequality in \eqref{inserito1} choose $C=(1+4d\beta \Delta^*/s^2)\phi_i(0)\Delta^*$ in \eqref{Tesicaso-C}  obtaining
\begin{align}
&\mP \left (S^{B^i}(K^i_{m_n})\geq m_n\phi_i(0)\Delta^* \left (1+\frac {4d\beta \Delta^*}{s^2} \right ) \left (1+ \frac {\tau}{10} \right ) \right) \leq \-e^{-   \frac{1}{3} m_n \phi_i(0)\Delta^* (1+\frac {4d\beta \Delta^*}{s^2})(\frac{\tau}{10})^2}
\\
\leq &{} \-e^{- \frac{1}{3}m_n \alpha\Delta^* (1+\frac {4d\beta \Delta^*}{s^2})(\frac{\tau}{10})^2}
\leq \-e^{- n \alpha^3 \Delta^{*3}(1-\frac {\tau}{10} \sqrt {\alpha \Delta^{*}}) (\frac{\tau}{10})^2\frac{19}{60}} \leq \-e^{- n \alpha^3 \Delta^{*3} \tau^2\frac{19^2 }{116 \times 10^3}}=\sigma (n). \label{second-bound}
\end{align}
Taking into account that $\frac {31}{34} \times \frac {3}{2}>1$, 
the estimates~\eqref{secondlemma1} and~\eqref{secondlemma2} are obtained.

Analogously, from Corollary \ref{domination} with $c=( 1- 5d\beta\Delta^* /s)\phi_i(0)\Delta^*$ in \eqref{Tesicaso-c}, for the leftmost inequality in \eqref{inserito2} one obtains
$$
\mP \left (S^{D^{j\to i}}(H^{j\to i}_{m_n})\leq m_n\phi_i(0)\Delta^* \left (1-\frac {5d\beta \Delta^*}{s} \right ) \left (1-\frac {\tau}{10} \right ) \right) \leq 
\-e^{- n \alpha^3 \Delta^{*3}{\tau}^2\frac{3 \times 19^2 \times 29}{4 \times 34 \times 58 \times 10^3}}=\sigma(n)^{\frac {29}{34}\times \frac {3}{2}}
$$
and for the rightmost one, with $C=( 1+5d\beta\Delta^* /s)\phi_i(0)\Delta^*$ in \eqref{Tesicaso-C}, one obtains
$$
\mP \left (S^{D^{j\to i}}(H^{j\to i}_{m_n})\geq m_n\phi_i(0)\Delta^* \left (1+\frac {5d\beta \Delta^*}{s^2} \right ) \left (1+ \frac {\tau}{10} \right ) \right)
\leq \-e^{- n \alpha^3 \Delta^{*3} \tau^2\frac{19^2 }{116 \times 10^3}}=\sigma (n).
$$
Since $\frac {29}{34} \times \frac {3}{2}>1$ the estimates~\eqref{thirdlemma1} and~\eqref{thirdlemma2} are obtained. 
 The bounds \eqref{fourthlemma} and \eqref{fifthlemma} are obtained in a completely analogous way, taking into account that $\phi_i(0)\pm \delta \geq \alpha$.
\end{proof}

\medskip 

With the help of the previous results we are in a position to control the behaviour of the estimators $ R^i(n)$ and $G^{j\to i}(n)$ defined in
\eqref{erre-1-2} and \eqref{gi-1-2}, respectively.
\begin{lemma}\label{ratioABleft}
For any positive integer $n$, the following inequalities hold, with $\sigma(n)$ defined in \eqref{sigma-ell-gamma},
\begin{equation}\label{tag1}
\mP \left ( R^i(n)\leq \phi_i(0)\Delta^* \left (1-\frac {3d\beta \Delta^*}{s}  \right ) \left (1-\frac {\tau}{10} \right ) \right )
\leq 2 \sigma(n)
\end{equation}
\begin{equation}\label{tag2}
\mP \left (  R^i(n)\geq \phi_i(0)\Delta^* \left (1+\frac {4d\beta \Delta^*}{s^2} \right ) \left (1+\frac {\tau}{10} \right ) \right )
\leq 2\sigma(n);
\end{equation}
furthermore:\,  i) if $j \notin \mathcal {V}^i$
\begin{equation}\label{ldevest2a}
\mP \left (G^{j\to i}(n) \leq {\phi_i(0)\Delta^* } \left (1-\frac {5d\beta \Delta^*}{s^2} \right ) \left (1-\frac {\tau}{10} \right ) \right )
\leq 
2\sigma(n)
\end{equation}
\begin{equation}\label{ldevest2b}
\mP \left ( G^{j\to i}(n) \geq {\phi_i(0) \Delta^* } \left (1+\frac {5d\beta \Delta^*}{s^3} \right ) \left (1+\frac {\tau}{10} \right ) \right )
\leq 2\sigma(n)
\end{equation}

ii) if $j \in \mathcal {V}^i_-$
\begin{equation}\label{ldevest3}
\mP \left (G^{j\to i}(n) \geq (\phi_i(0)-\delta)\Delta^* \left (1+\frac {5d\beta \Delta^*}{s^3} \right ) \left (1+\frac {\tau}{10} \right ) \right)\leq
2\sigma(n)
\end{equation}

iii) if $j \in \mathcal {V}^i_+$
\begin{equation}\label{ldevest4}
\mP \left (G^{j\to i}(n) \leq (\phi_i(0)+\delta)\Delta^* \left (1-\frac {5d\beta \Delta^*}{s^2} \right ) \left (1-\frac {\tau}{10} \right )\right)\leq
2\sigma(n).
\end{equation}

\end{lemma}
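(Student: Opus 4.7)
The plan is to reduce each of the six inequalities to a combination of Lemma \ref{howmany} (which controls whether the estimator ``saturates'' inside the time horizon) and Lemma \ref{prob-cond} (which controls the saturated estimator itself). The decomposition is identical in all six cases, so it is enough to explain the strategy on, say, \eqref{tag1}.

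First I would note the identities $\{K^i_{m_n} > t_n\} = \{S^{A^i}(t_n) < m_n\}$ and $\{H^{j\to i}_{m_n} > n\} = \{S^{C^{j\to i}}(n) < m_n\}$, so by \eqref{firstlemma0} and \eqref{firstlemma1} these ``bad'' events each have probability at most $\rho(n)$. On the complementary ``good'' events the piecewise definitions \eqref{erre-1-2} and \eqref{gi-1-2} collapse to the single expressions $R^i(n) = S^{B^i}(K^i_{m_n})/m_n$ and $G^{j\to i}(n) = S^{D^{j\to i}}(H^{j\to i}_{m_n})/m_n$, which are precisely the quantities bounded by Lemma \ref{prob-cond}.

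The core decomposition for \eqref{tag1} then reads
\begin{align*}
\mP\!\left(R^i(n)\le \phi_i(0)\Delta^*\!\left(1-\tfrac{3d\beta\Delta^*}{s}\right)\!\left(1-\tfrac{\tau}{10}\right)\right)
&\le \mP\!\left(\tfrac{S^{B^i}(K^i_{m_n})}{m_n}\le \phi_i(0)\Delta^*\!\left(1-\tfrac{3d\beta\Delta^*}{s}\right)\!\left(1-\tfrac{\tau}{10}\right),\, K^i_{m_n}\le t_n\right)\\
&\qquad + \mP(K^i_{m_n}>t_n),
\end{align*}
with the first term bounded by $\sigma(n)$ via \eqref{secondlemma1} and the second by $\rho(n)$ via \eqref{firstlemma0}. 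Inequalities \eqref{tag2}--\eqref{ldevest4} are obtained in exactly the same manner, invoking \eqref{secondlemma2}--\eqref{fifthlemma} respectively, with the role of $K^i_{m_n}$ vs.\ $t_n$ replaced by $H^{j\to i}_{m_n}$ vs.\ $n$ when handling $G^{j\to i}(n)$.

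It remains to convert the resulting $\rho(n)+\sigma(n)$ into the claimed $2\sigma(n)$, which amounts to the purely numerical check
$$
\frac{19}{4\times 10^3} \;=\; \frac{19\times 29}{116\times 10^3} \;=\; \frac{551}{116\times 10^3} \;\ge\; \frac{361}{116\times 10^3} \;=\; \frac{19^2}{116\times 10^3},
$$
so $\rho(n)\le \sigma(n)$ and hence $\rho(n)+\sigma(n)\le 2\sigma(n)$. I expect this last arithmetic comparison to be the only genuinely delicate point: the constants in Lemmas \ref{howmany} and \ref{prob-cond} were engineered so that this comparison holds, and verifying it is the hinge that lets us absorb the saturation failure into the Bernstein-type tail bound instead of incurring an extra additive error term.
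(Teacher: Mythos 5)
Your proposal is correct and follows essentially the same route as the paper: decompose along the event $\{K^i_{m_n}\le t_n\}$ (equivalently $\{S^{A^i}(t_n)\ge m_n\}$), bound the saturation-failure term by Lemma \ref{howmany} and the stopped-sum term by Lemma \ref{prob-cond}, then absorb $\rho(n)$ into $\sigma(n)$. One small remark: your direct exponent comparison $\tfrac{19}{4\times 10^3}=\tfrac{551}{116\times 10^3}\ge \tfrac{361}{116\times 10^3}=\tfrac{19^2}{116\times 10^3}$ is actually cleaner than the paper's phrasing, which asserts $\rho(n)=\sigma(n)^{19/29}$ when in fact $\rho(n)=\sigma(n)^{29/19}$ (the exponent is inverted in the paper, though the conclusion $\rho(n)<\sigma(n)$ is still correct).
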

\begin{proof}
We are going to prove only \eqref{tag1} in detail, since the other inequalities \eqref{tag2} -- \eqref{ldevest4} need completely similar arguments. So, observe that
\begin{align*}
& \left \{R^i(n)\leq \phi_i(0)\Delta^* \left (1-\frac {3d\beta \Delta^*}{s} \right ) \left (1-\frac {\tau}{10} \right ) \right \}
\\&{}\quad  \subset  \left \{ K^i_{m_n}> t_n  \right \}
 \cup \left  \{
S^{B_i}(K^i_{m_n})\leq m_{n} \phi_i(0)\Delta^* \left (1-\frac {3d\beta \Delta^*}{s}  \right ) \left (1-\frac {\tau}{10} \right )  \right \}\, 
\end{align*}
$$
\subset  \left \{S^{A_i}(t_n) < m_n \right \} \cup  \left \{
S^{B_i} (K^i_{m_n})\leq m_{n} \phi_i(0)\Delta^* \left (1-\frac {3d\beta \Delta^*}{s}  \right ) \left (1-\frac {\tau}{10} \right )  \right \}\,.
$$
Since the probability of the two events have been bounded from above by $\rho(n)$ and $\sigma (n)$ in \eqref{firstlemma0} and \eqref{secondlemma1}, respectively, then
\begin{align*}
\mP \left ( R^i(n)\leq \phi_i(0)\Delta^*\left(1-\frac {\tau}{10}\right)\left(1-\frac {3d\beta \Delta^*}{s} \right) \right )
&\leq \rho(n)+\sigma(n)\, .
\end{align*}
 Since $\rho(n)=\sigma(n)^{19/29}<\sigma(n)$, see  \eqref{rho} and \eqref{sigma-ell-gamma}, the proof of \eqref{tag1} is concluded.
\end{proof}
\bigskip

To conclude the proof of Theorem \ref{main} we need to deduce suitable bounds for
 the difference $G^{j\to i}(n)-R^i(n)$. Before stating them it is convenient to recall that
$$
\xi_1(\Delta^*)=\beta \Delta^* \left[\tfrac {\tau}{5}+\left(9-\tfrac {\tau}{10}\right)\frac {d\beta\Delta^*}{s^2}\right], \; \xi_2(\Delta^*)=\beta \Delta^* \left\{\tfrac {\tau}{5}+\left[5+3s^2+\tfrac {\tau(5-3s^2)}{10}\right]\frac {d\beta\Delta^*}{s^3}\right\},
$$
$$
\lambda_1(\Delta^*)=\beta \Delta^*\,\left(1-\frac {5d\beta \Delta^*}{s^2}\right)
\left(1-\tfrac {\tau}{10}\right),\,\,\,\lambda_2(\Delta^*)=\beta \Delta^*\,
\left(1+ \frac {5d\beta\Delta^*}{s^3}\right)
\left(1+\tfrac {\tau}{10}\right).
$$

 \begin{lemma}\label{ultimo} For any positive integer $n$
the following inequalities hold:
\\

\indent
i) if $j \notin \mathcal {V}^i$
\begin{equation}\label{ldevest2aBIS}
\mP \left (G^{j\to i}(n)-R^i(n)\leq - \xi_1(\Delta^*)\right) \leq 4\sigma(n),
\end{equation}
\begin{equation}\label{ldevest2bBIS}
\mP \left (G^{j\to i}(n)-R^i(n) \geq  \xi_2(\Delta^*)\right)
\leq  4\sigma(n),
\end{equation}
from which
\begin{equation}\label{atag}
\mP \left(- \xi_1(\Delta^*)< G^{j\to i}(n)-R^i(n) <  \xi_2(\Delta^*)\right)
\geq  1-8 \sigma(n);
\end{equation}

ii) if $j \in \mathcal {V}^i_-$
\begin{equation}\label{ldevest3BIS}
\mP \left (G^{j\to i}(n)-R^i(n) < \xi_2(\Delta^*) -\tau \lambda_2(\Delta^*)\right)\geq
1-  4\sigma(n\,);
\end{equation}

iii) if $j \in \mathcal {V}^i_+$
\begin{equation}\label{devest4BIS}
\mP \left (G^{j\to i}(n)-R^i(n) >  -\xi_1(\Delta^*) +\tau \lambda_1(\Delta^*)\right)
\geq 1-  4\sigma(n).
\end{equation}
\end{lemma}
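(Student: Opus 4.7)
The plan is simply to split the event $\{G^{j\to i}(n) - R^i(n) \in \text{bad set}\}$ as a union of two one-sided tail events (one for $R^i(n)$, one for $G^{j\to i}(n)$), and apply the six one-sided bounds already established in Lemma~\ref{ratioABleft}, each valid with probability $\geq 1 - 2\sigma(n)$. A union bound then yields the claimed $4\sigma(n)$ (or $8\sigma(n)$ in the two-sided case \eqref{atag}). The only genuine work is algebraic: one must verify that the differences of the deterministic bounds match $\pm \xi_1(\Delta^*)$, $\pm \xi_2(\Delta^*)$ (resp.\ are offset by $\tau\lambda_1(\Delta^*)$, $\tau\lambda_2(\Delta^*)$).

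For case (i), $j \notin \mathcal{V}^i$, I would combine \eqref{tag2} with \eqref{ldevest2a} to get the lower bound and \eqref{tag1} with \eqref{ldevest2b} to get the upper bound. On the intersection of the favourable events, expanding
$$
\phi_i(0)\Delta^*\bigl[(1 + 5d\beta\Delta^*/s^3)(1+\tau/10) - (1 - 3d\beta\Delta^*/s)(1-\tau/10)\bigr]
$$
and its counterpart, the cross terms simplify (using $s \leq 1$ to collect $a+b$ and $a-b$) to
$\phi_i(0)\Delta^*\bigl[\tau/5 + (5+3s^2 + \tau(5-3s^2)/10)\, d\beta\Delta^*/s^3\bigr]$, which is $\leq \xi_2(\Delta^*)$ after bounding $\phi_i(0) \leq \beta$. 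The symmetric calculation reproduces $-\xi_1(\Delta^*)$. Combining the two yields \eqref{atag}.

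For cases (ii) and (iii) the same union-bound scheme is used, but the tail bound on $G^{j\to i}(n)$ is now \eqref{ldevest3} or \eqref{ldevest4}, in which $\phi_i(0)$ is replaced by $\phi_i(0)\mp\delta$. The difference of bounds then splits into the ``case-(i) piece'' (which remains bounded by $\xi_2(\Delta^*)$ or $-\xi_1(\Delta^*)$ exactly as above) plus a new term of the form $\mp\, \delta\,\Delta^*\bigl(1 \pm 5d\beta\Delta^*/s^{2,3}\bigr)(1\pm\tau/10)$. Using the identity $\delta = \tau\beta$ from \eqref{s-tau}, this extra term is precisely $\mp \tau\lambda_2(\Delta^*)$ (in case ii)) or $\pm\tau\lambda_1(\Delta^*)$ (in case iii)) by the definitions of $\lambda_1$, $\lambda_2$ recalled just above the lemma. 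Hence \eqref{ldevest3BIS} and \eqref{devest4BIS} follow, each with the advertised probability $\geq 1 - 4\sigma(n)$.

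The only ``obstacle'' is bookkeeping: one must recognize that the definitions of $\xi_1, \xi_2, \lambda_1, \lambda_2$ are rigged so that the algebra closes, and in particular that replacing $\phi_i(0)$ by $\beta$ in the upper-bounding step does not lose the matching with $\xi_1, \xi_2$. All the probabilistic content has been absorbed into Lemma~\ref{ratioABleft}, so no further concentration argument is required here.
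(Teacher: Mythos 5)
Your proposal is correct and matches the paper's own proof essentially step for step: both use a union bound to reduce each two‐sided event to two one‐sided tail events from Lemma~\ref{ratioABleft}, then check that the deterministic thresholds differ by exactly $\tfrac{\phi_i(0)}{\beta}\xi_1(\Delta^*)$ or $\tfrac{\phi_i(0)}{\beta}\xi_2(\Delta^*)$ (with the extra $\pm\delta\Delta^*(\cdots)=\mp\tau\lambda_{1,2}(\Delta^*)$ term in cases (ii) and (iii)), and finally invoke $\phi_i(0)\leq\beta$ to pass to $\xi_{1,2}(\Delta^*)$. The only cosmetic difference is that the paper enlarges the event by $\phi_i(0)\le\beta$ first and then union-bounds, treating cases (i)/(ii) and (i)/(iii) jointly via a parameter $\chi\in\{0,1\}$, whereas you carry out the algebra afterwards; the substance is identical.
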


\begin{proof} First observe that for $\chi=0,1$, by the union bound
$$
\mP \left (G^{j\to i}(n)-R^i(n)\leq - \xi_1(\Delta^*)+\chi\tau \lambda_1(\Delta^*)\right)
$$
$$ 
\leq \mP \Big(G^{j\to i}(n)-R^i(n)\leq -\frac{\phi_i(0)}{\beta} \xi_1(\Delta^*)+\chi\tau \lambda_1(\Delta^*)\Big)
$$
$$
\leq \mP \left(G^{j\to i}(n)\leq \phi_i(0)\left(1-\frac {5d\beta \Delta^*}{s^2}\right)\left(1-\tfrac {\tau}{10}\right) +\chi\tau \lambda_1(\Delta^*)\right)
$$
$$
+\mP \left(R^i(n)\geq \phi_i(0)\left(1+\frac {4d\beta \Delta^*}{s^2}\right)\left(1+\tfrac {\tau}{10}\right)\right),
$$
since 
$$
 \phi_i(0)\left(1-\frac {5d\beta \Delta^*}{s^2}\right)\left(1-\tfrac {\tau}{10}\right)-\phi_i(0)\left(1+\frac {4d\beta \Delta^*}{s^2}\right)\left(1+\tfrac {\tau}{10}\right)=-\frac{\phi_i(0)}{\beta}\xi_1(\Delta^*).
$$
As a consequence \eqref{ldevest2aBIS} and \eqref{devest4BIS} are established by using the  bounds \eqref{ldevest2a}, \eqref{ldevest4} and \eqref{tag2}. Analogously, for $\chi=0,1$, by the union bound
$$
\mP \left(G^{j\to i}(n)-R^i(n) \geq  \xi_2(\Delta^*)-\chi\tau \lambda_2(\Delta^*)\right)
$$
$$ 
\leq \mP \Big(G^{j\to i}(n)-R^i(n)\geq \frac{\phi_i(0)}{\beta}\xi_2(\Delta^*)-\chi\tau \lambda_2(\Delta^*)\Big)
$$
$$
\leq \mP \left( G^{j\to i}(n)\geq \frac{\phi_i(0)}{\beta}\left(1+\frac {5d\beta \Delta^*}{s^3}\right)\left(1+\tfrac {\tau}{10}\right) -\chi\tau \lambda_2(\Delta^*)\right)
$$
$$
+\mP \left( R^i(n)\leq \frac{\phi_i(0)}{\beta}\left(1-\frac {3d\beta \Delta^*}{s^2} \right) \left(1-\tfrac {\tau}{10}\right)\right),
$$
since 
$$
\frac{\phi_i(0)}{\beta}\left(1+\frac {5d\beta \Delta^*}{s^3}\right)\left(1+\tfrac {\tau}{10}\right)
-\frac{\phi_i(0)}{\beta}\left(1-\frac {3d\beta \Delta^*}{s^2} \right) \left(1-\tfrac {\tau}{10}\right)
=\frac{\phi_i(0)}{\beta}\xi_2(\Delta^*).
$$
 As a consequence \eqref{ldevest2bBIS} and \eqref{ldevest3BIS} are established by using the bounds \eqref{ldevest2b}, \eqref{ldevest3} and \eqref{tag1}. Moreover \eqref{atag}  is trivially obtained by 
\eqref{ldevest2aBIS} and \eqref{ldevest2bBIS}.
\end{proof}

The proof of Theorem~\ref{main} follows directly from  Lemma \ref{messo} and Lemma~\ref{ultimo} and finally by substituting 
$\alpha {\Delta^{*}}=\frac{s^4\tau}{34d}$ in the expression for $\sigma(n)$. The value
$
\vartheta_0=\frac{19^2}{3\times 116\times {34}^2 \times 10^3}$
is readily computed.

\section*{Acknowledgments}
This work is part of USP project {\em Mathematics, computation, language and the brain}, FAPESP project {\em Research, Innovation and Dissemination Center for Neuromathematics} (grant 2013/07699-0),  Sapienza Project 2016 {\em Processi stocastici teoria e applicazioni} RM11615501013C24, and Sapienza Project 2017 {\em Modelli stocastici nelle scienze e nell'in\-ge\-gne\-ria} RM11715C7D9F7762. AG is partially supported by CNPq fellowship (grant 309501/2011-3.).


\end{document}